\newtheorem{theorem}{Theorem}[section]
\newtheorem*{lemma*}{Lemma}
\newtheorem*{prop*}{Proposition}
\newtheorem{lem}[theorem]{Lemma}
\newtheorem{cor}[theorem]{Corollary}
\newtheorem{prop}[theorem]{Proposition}
\newtheorem{conjecture}[theorem]{Conjecture}
\newtheorem{op}[theorem]{Open Problem}
\theoremstyle{definition}
\newtheorem{example}[theorem]{Example}
\newtheorem{definition}[theorem]{Definition}
\newtheorem{remark}[theorem]{Remark}
\newcommand\note[1]{\textbf{#1}}
\renewcommand{\epsilon}{\varepsilon}
\newcommand{\R}{\mathbb{R}}
\newcommand{\calC}{\mathcal{C}}
\newcommand{\tabfunc}{\mathcal T}
\definecolor{alexmcolor}{RGB}{9,6,250}
\definecolor{amandacolor}{RGB}{50,150,50}
\DeclareMathOperator{\argmax}{argmax}
\DeclareMathOperator{\SYT}{SYT}
\DeclareMathOperator{\rSYT}{rSYT}
\DeclareMathOperator{\rSCYT}{rSCYT}
\def\R{\mathbb{R}}
\newcommand{\calA}{\mathcal{A}}
\newcommand{\calP}{\mathcal{P}}
\date{\today}
\keywords{\note{Sorting, Sorting Networks, Arrangements}}
\subjclass[2020]{
\note{MSC 05A16, 20F55, 52C35}}
\title{Realizable Standard Young Tableaux}
\author[Araujo]{Igor Araujo\textsuperscript{*}}
\address{\textsuperscript{*}Department of Mathematics, University of Illinois at Urbana-Champaign, Urbana, IL, USA 61801}
\author[Black]{Alexander E. Black\textsuperscript{\dag}}
\address{\textsuperscript{\dag}Department of Mathematics, Bowdoin College, Brunswick, ME 04011}
\author[Burcroff]{Amanda Burcroff\textsuperscript{\ddag}}
\address{\textsuperscript{\ddag}Department of Mathematics, MIT, Cambridge, MA, USA 02139}
\author[Gao]{Yibo Gao\textsuperscript{\S}}
\address{\textsuperscript{\S}Department of Mathematics, Peking University, Beijing, China 100871}
\email{\parbox[t]{\linewidth}{igoraa2@illinois.edu, a.black@bowdoin.edu, amandabu@mit.edu,\\
gaoyibo@bicmr.pku.edu.cn, rkrueger@andrew.cmu.edu, \and alexmcd@uoregon.edu}}
\author[Krueger]{Robert A. Krueger\textsuperscript{+}}
\address{\textsuperscript{+}Department of Mathematics, Carnegie Mellon University, Pittsburgh, PA, USA 15213}
\author[McDonough]{Alex McDonough\textsuperscript{$\circ$}}
\address{\textsuperscript{$\circ$}Department of Mathematics, University of Oregon, Eugene, OR, USA 97403}
\begin{document}

\maketitle

\begin{abstract}
Given two vectors $u$ and $v$, their outer sum is given by the matrix $A$ with entries $A_{ij} = u_{i} + v_{j}$. If the entries of $u$ and $v$ are increasing and sufficiently generic, the total ordering of the entries of the matrix is a standard Young tableau of rectangular shape. We call rectangular standard Young tableaux arising in this way \emph{realizable}. The set of realizable tableaux was defined by Mallows and Vanderbei for studying a deconvolution algorithm, but we show they have appeared in many other contexts including sorting algorithms, quantum computing, random sorting networks, reflection arrangements, fiber polytopes, and Goodman and Pollack's theory of allowable sequences. In our work, we prove tight bounds on the asymptotic number of realizable rectangular tableaux. We also derive tight asymptotics for the number of realizable allowable sequences, which are in bijection with realizable staircase-shaped standard Young tableaux with the notion of realizability coming from the theory of sorting networks. 
\end{abstract}

\section{Introduction}
Motivated by the study of a deconvolution algorithm in~\cite{MallowsVanderbei}, Mallows and Vanderbei defined a novel subset of rectangular standard Young tableaux. We say a vector $x \in \R^k$ is \emph{increasing} if $x_{1} < x_{2} < \dots < x_{k}$. Given two increasing vectors $x \in \R^m$ and $y \in \R^n$, we define their \emph{outer sum} as the matrix $x \circ y \in \R^{m \times n}$ with entries $(x \circ y)_{ij} = x_{i} + y_{j}$. Since $x$ and $y$ are increasing, the entries of $x \circ y$ must be increasing from left to right and top to bottom. For a generic choice of $x$ and $y$, all entries of $x \circ y$ are distinct. In this case, the total order on $[m] \times [n]$ given by the entries of $x\circ y$ must correspond to a standard Young tableau of rectangular shape $m \times n$, which we denote $\tabfunc(x \circ y)$. We call a rectangular standard Young tableau, or rectangular tableau for short, \emph{realizable} if it is equal to $\tabfunc(x \circ y)$ for some $x$ and $y$. See Figure \ref{fig:firstexample} for an example. In what follows, we denote the set of all $m \times n$ rectangular tableaux by $\SYT(m,n)$, and the set of realizable rectangular tableaux by $\rSYT(m,n)$. 

\begin{figure}
\[\begin{blockarray}{cccccc}
& 0 & 1 & 5 & 15 & 16\\
\begin{block}{c(ccccc)}
0 & 0 & 1 & 5 & 15 & 16\\
2 & 2 & 3 & 7 & 17 & 18\\
9 & 9 & 10 & 14 & 24 & 25\\
\end{block} 
\end{blockarray}
\hspace{1 cm}\pmb{\Rightarrow}\hspace{1 cm}
\begin{blockarray}{ccccc}
&&&&\\
\begin{block}{|c|c|c|c|c|}
\hline 1 & 2 & 5 & 10 & 11\\
\hline3 & 4 & 6 & 12 & 13\\
\hline7 & 8 & 9 & 14 & 15\\
\hline
\end{block} 
\end{blockarray}
\]
 \caption{Above are the matrix given by $x\circ y$ and the rectangular tableau $\tabfunc(x \circ y)$ for $x~=~(0,2,9)$ and $y~=~(0,1,5,15,16)$.}\label{fig:firstexample}
\end{figure}

As a first step for studying realizable rectangular tableaux, Mallows and Vanderbei showed that all $2 \times n$ tableaux are realizable and gave examples of $m \times n$ tableaux for small choices of $m$ and $n$ that are not realizable. They concluded that the problem of studying realizable tableaux is difficult and required some new perspective. The goal of our work is to asymptotically enumerate these tableaux and describe many other contexts in which they arise including sorting algorithms, quantum computing, random sorting networks, reflection arrangements, fiber polytopes, and Goodman and Pollack's theory of allowable sequences. 

The key starting point for our analysis comes from relating realizable rectangular tableaux to regions in a hyperplane arrangement, as observed by the second author and Sanyal in~\cite{FlagPoly}.  Define the $\rSYT(m,n)$-arrangement $H$ to be the union of $\{(x,y) \in \R^{m} \times \R^{n}: x_{i} + y_{k} = x_{j} + y_{\ell}\}$ over all $i, j  \in [m]$ and $k, \ell \in [n]$. Then, the set of regions of $H$ contained in the cone $\{(x,y): x_{i} < x_{i+1}, y_{j} < y_{j+1} \text{ for all } i \in [m], j \in [n]\}$ is in bijection with $\rSYT(m,n)$. Thus, $|\rSYT(m,n)| = \frac{|r(H)|}{m!n!}$, where $r(H)$ denotes the set of regions of $H$.

This hyperplane arrangement and the corresponding observation also appeared independently due to Klyachko in the context of quantum computing for the $N$-representability problem for qubits (see Remark $3.4.2$ of~\cite{Klyachko}). In that context, Klyachko called realizable rectangular tableaux \emph{cubicles}. The case of cubicles is a special case of a larger story of arrangements associated to fundamental particles in physics \cite{ConvexRepresentability, DensityFoundations, LineupPolys}. This arrangement is also of independent interest in geometric combinatorics for studying generalizations of generalized permutahedra to allow for edge directions of the form $e_{i} -e_{j} + e_{k} - e_{\ell}$, which was one of the primary motivations for the introduction of nested braid fans in \cite{nestedbraidfans}. This geometric perspective is key for proving our first two main theorems, which provide asymptotic bounds on the number of realizable rectangular tableaux.

\begin{theorem}\label{thm::upper and lower}
For all $m, n \in \mathbb{N}$ such that $m,n \geq 3$, we have
\[ |\rSYT(m-1,n)| \left( \binom{n}{2} + 1 \right) \leq |\rSYT(m,n)| \leq \left(\frac{(m^2-m+1)e^2}{2}\right)^{m+n} n \left(\frac{n}{m}\right)^{m} .\]
In particular, we have $|\rSYT(n,n)| = n^{4n+o(n)}$.
\end{theorem}

When $m$ is constant, \cref{thm::upper and lower} shows that $|\rSYT(m,n)|$ is exponential in $n$ but is not precise enough to determine the base of the exponential.

Our upper bound arguments generalize an argument of Fredman in~\cite{FredmanUpperBound} from complexity theory. In the algorithmic theory of sorting, a longstanding open problem is to find the time complexity of sorting a list of the form $X + Y = \{x+y: x \in X,~ y \in Y\}$ for two lists $X, Y \subseteq \R$ with $|X| = |Y| = n$. Fredman~\cite{FredmanUpperBound} studied this problem nearly fifty years ago, and in doing so proved an upper bound of $n^{8n + o(n)}$ on the number of orderings of a list of the form $X + Y$, with some more recent work in \cite{lowerboundsumsort, ksumdecisiontrees}. If the two lists $X$ and $Y$ are ordered such that they are increasing, then the set of possible total orderings is the set of realizable rectangular tableaux. Through a more careful analysis of the asymptotics, \cref{thm::upper and lower} improves Fredman's estimate, providing the correct number of orderings of $X+Y$ up to the $o(n)$ term in the exponent.

By the hook length formula, we may establish the following corollary in contrast to the case $m = 2$.

\begin{cor} \label{cor::prob_to_zero}
For any sequence $(m_n)_{n \in \mathbb{N}}$ such that $3 \leq m_n \leq \max(n,3)$ for all $n \in \mathbb{N}$, the probability that a uniformly randomly chosen $m_n \times n$ tableau is realizable tends to $0$ as $n \to \infty$.
\end{cor}

Next, we move to staircase-shaped standard Young tableaux, or staircase tableaux for short, where there is an analogous notion of realizability coming from the theory of sorting networks. A sorting network is any way of sorting the identity permutation to the order reversing permutation using adjacent transpositions, where at each step, we increase the number of inversions by applying a single adjacent transposition. Sorting networks are in bijection with staircase tableaux, monotone paths on the permutahedron, and maximal chains in the weak Bruhat order~\cite{RandSortNets}. Sorting networks also have a notion of realizability called being \emph{geometrically realizable} or \emph{stretchable}; see~\cite{PatternTheorem} or Section \ref{sec:staircase}. A \emph{realizable staircase tableau} is one whose corresponding sorting network is geometrically realizable, and we denote the set of all realizable staircase tableaux by $\rSYT(n)$. In~\cite{PatternTheorem}, Angel, Gorin, and Holroyd showed that the probability that a random sorting network is geometrically realizable tends to $0$, which informs the story of limit shapes of random sorting networks (see~\cite{RandSortNets, ArchLimit} for details). 

The analogy between the notions of realizability for staircase tableaux and rectangular tableaux may be understood in terms of paths on polytopes and the fiber polytope construction. Given a polytope $P \subseteq \R^{n}$ with vertices $V(P)$ and $w \in \R^{n} \setminus \mathbf{0}$, a \emph{monotone path} is a path on the graph of a polytope (i.e., its set of vertices and edges) from a vertex $x$ that minimizes of $w^{T}x$ to one that maximizes $w^{T}x$ such that each step of the path increases $w^{T} x$. A monotone path $\ell$ is called \emph{coherent} with respect to $w$ if there exists some $c \in \R^{n} \setminus \mathbf{0}$ such that the set of vertices $V(\ell)$ in $\ell$ is exactly 
\[\{v \in V(P): v = \argmax_{x \in P}(c + \lambda w)^{T} x \text{ for some } \lambda \in \R\}.\] 
See~\cite{CellStringsOnPolytopes} for more details. In particular, these correspond to vertices of the monotone path polytope introduced in \cite{FiberPoly} as a special case of the fiber polytope construction.

For rectangular tableaux, the second author and Sanyal  \cite{FlagPoly} showed that monotone paths on the permutahedron $\Pi_{m + n}$ with respect to the vector $e_{[m]} = \sum_{i \in [m]} e_{i}$ are in bijection with $S_{m} \times S_{n} \times \SYT(m,n)$, where $S_{n}$ denotes the symmetric group on $n$ elements and the permutahedron $\Pi_n$ is the convex hull of the points in $\R^n$ whose coordinates are a permutation of $\{1,\dots,n\}$. Furthermore, coherent monotone paths with respect to $e_{[m]}$ are in bijection with $S_{m} \times S_{n} \times \rSYT(m,n)$.  For staircase tableaux, there is a similar relationship.  Monotone paths on the permutahedron $\Pi_{n}$ with respect to any generic increasing $w \in \R^{n}$ are in bijection with standard Young tableaux of staircase shape. Furthermore, realizable staircase tableaux are in bijection with monotone paths such that there exists some $w$ for which they are coherent.  The first bijection is the Edelman-Greene bijection \cite{EdelmanGreene}, and the connection to coherence comes from~\cite{sweeppolys}. Hence, there is a fundamental geometric relationship between these two notions of realizability. In particular, characterizing these two types of realizable tableaux corresponds exactly to the basic question of understanding which regions a line can intersect in the braid arrangement.

Our main contribution concerning staircase tableaux is providing tight asymptotics for the number of realizable staircase tableaux, and hence the number of geometrically realizable sorting networks.  Sorting networks are also equivalent to allowable sequences in the language of Goodman and Pollack~\cite{AllowableSeqSurvey}. They called an allowable sequence realizable if its corresponding sorting network is geometrically realizable. Hence, our result is an asymptotic enumeration of realizable allowable sequences. Goodman and Pollack showed in~\cite{GoodmanPollackBounds} that the number of allowable sequences, or equivalently combinatorial types of point configurations of $n$ points in $\R^{2}$, is at most $n^{8n}$.   Our result is an improvement upon theirs in that we provide tight asymptotics. For the upper bound, we use a similar approach to theirs with a more careful analysis, as with the bound we derive from Fredman's work for rectangular tableaux. For the lower bound, we directly generalize their argument for estimating the number of order types of labeled point configurations. 

\begin{theorem}
\label{thm:staircasebounds}
The number of realizable standard Young tableaux of staircase shape $(n, n-1, \dots, 2,1)$ is $n^{5n+o(n)}$. 
\end{theorem} 

In~\cite{PatternTheorem}, Angel, Gorin, and Holroyd showed that the probability of a random sorting network being realizable tends to $0$ as $n \to \infty$ at a rate of at most $e^{-Cn}$ and suggested that this can be improved to $e^{-Cn^{2}}$, for some constant $C$. In fact, up to the recontextualization that we provide here and an application of the well-known hook length formula, their result (\cite[Theorem 1.3]{PatternTheorem}) and the improved rate can both be proven using the work Goodman and Pollack from over 20 years prior \cite{GoodmanPollackBounds} (see \cref{rem:realizablestaircase}). 

The paper is organized as follows. In \cref{sec:rect}, we discuss upper and lower bounds on the number of rectangular realizable standard Young tableaux. In \cref{sec:staircase}, we provide asymptotics for the number of realizable standard Young tableaux of staircase shape. Finally, in \cref{sec:FurDir}, we supply many open problems and further directions for where to go next with realizable tableaux.

\section{Rectangular Tableaux} \label{sec:rect}
In this section, we study the number of realizable rectangular standard Young tableaux.  We place upper bounds by bounding the number of regions in a related hyperplane arrangement, and our lower bounds are obtained by considering iterated row extensions of realizable tableaux.  In the case of fixed height and increasing width, as well as the case of simultaneously increasing height and width, we show that our bounds are asymptotically tight.

\subsection{Obstructions to Realizability}

The smallest nontrivial case to consider are the $2 \times n$ standard Young tableaux. Here, Mallows and Vanderbei showed that there are no obstructions to realizability, i.e., all $2 \times n$ standard Young tableaux are realizable \cite[Theorem 7]{MallowsVanderbei}. In particular, the total number of (realizable) $2 \times n$ standard Young tableaux is given by the $n^{th}$ Catalan number.

For larger rectangular tableaux, realizability of rectangular tableaux is a rather delicate property, and the presence of certain substructures can prevent a tableau from being realizable. Mallows and Vanderbei identified a few such substructures, which they called \emph{taboo configurations}~\cite{MallowsVanderbei}. In the following proposition, we give a general description of a class of obstructions to realizability. 

\begin{prop}\label{prop:tabooconfig}
Let  $M = \begin{bmatrix}M_x\end{bmatrix}_{x \in [m]\times[n]}$ be an $m \times n$ tableau.  If there exist two disjoint sets of equal size $A = \{a_1,\dots,a_\ell\},B=\{b_1,\dots,b_\ell\} \subseteq [m]\times[n]$ such that
\begin{enumerate}[(i)]
\item\label{cond::preal1} $M_{a_k} < M_{b_k}$ for each $k \in \{1,\dots,\ell\}$,
\item\label{cond::preal2} $\left|A \cap \left(\{i\} \times[n]\right)\right| = \left|B \cap \left(\{i\} \times[n]\right)\right|$ for each $i \in [m]$, and
\item\label{cond::preal3} $\left|A \cap \left([m]\times\{j\}\right)\right| = \left|B \cap \left([m]\times\{j\}\right)\right|$ for each $j \in [n]$,
\end{enumerate}
then $M$ is not realizable.
\end{prop}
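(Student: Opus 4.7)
The plan is to derive a contradiction by assuming $M = \tabfunc(x \circ y)$ for some increasing $x \in \R^m$ and $y \in \R^n$, then summing the inequalities forced by condition \eqref{cond::preal1} and showing that conditions \eqref{cond::preal2} and \eqref{cond::preal3} cause the two sides of the resulting sum to coincide.

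More concretely, write each $a_k = (a_k^{(1)}, a_k^{(2)})$ and $b_k = (b_k^{(1)}, b_k^{(2)})$. If $M$ were realizable, then the defining property of $\tabfunc(x \circ y)$ is that $M_p < M_q$ iff $x_{p^{(1)}} + y_{p^{(2)}} < x_{q^{(1)}} + y_{q^{(2)}}$. Thus condition \eqref{cond::preal1} translates into the $\ell$ strict inequalities
\[
x_{a_k^{(1)}} + y_{a_k^{(2)}} \;<\; x_{b_k^{(1)}} + y_{b_k^{(2)}} \qquad (k=1,\dots,\ell).
\]
Summing these yields the strict inequality
\[
\sum_{k=1}^{\ell} x_{a_k^{(1)}} + \sum_{k=1}^{\ell} y_{a_k^{(2)}} \;<\; \sum_{k=1}^{\ell} x_{b_k^{(1)}} + \sum_{k=1}^{\ell} y_{b_k^{(2)}}.
\]

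Next I would regroup each sum by the row (resp.\ column) index. The first sum on the left equals $\sum_{i=1}^{m} |A \cap (\{i\}\times[n])|\, x_i$, and the first sum on the right equals $\sum_{i=1}^{m} |B \cap (\{i\}\times[n])|\, x_i$; by condition \eqref{cond::preal2} these two are identical. Similarly, the $y$-sums on both sides agree by condition \eqref{cond::preal3}. Hence the two sides of the displayed inequality are equal, contradicting strictness.

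The proof is essentially a linear-algebraic cancellation, so there is no real obstacle: the only thing to be careful about is writing the multiplicity bookkeeping cleanly so that conditions \eqref{cond::preal2} and \eqref{cond::preal3} are clearly the exact hypotheses needed to make the $x$-sums and $y$-sums on the two sides cancel. No assumption that $A$ and $B$ are disjoint is actually used in the argument; disjointness is stated for convenience and to avoid trivial matches, but the same proof works verbatim without it.
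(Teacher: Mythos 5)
Your proof is correct and takes essentially the same approach as the paper's: the paper also sums the realized values $x_{a_k^{(1)}}+y_{a_k^{(2)}}$ and $x_{b_k^{(1)}}+y_{b_k^{(2)}}$, observes that condition (i) forces a strict inequality between those sums, and then uses conditions (ii) and (iii) to regroup and show the sums coincide. You simply spell out the bookkeeping (splitting into $x$-sums and $y$-sums and matching row/column multiplicities) that the paper compresses into one sentence.
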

\begin{proof}
 Suppose for the sake of contradiction that $M$ is realizable. Then there exist $c \in \mathbb{R}^{m}$ and $w \in \mathbb{R}^{n}$ such that the ordering of the entries of the outer sum $C=c \circ w$ is given by $M$.
 
 Consider the sums $S_A = \sum_{k = 1}^\ell C_{a_k}$ and $S_B = \sum_{k = 1}^\ell C_{b_k}$ of the entries indexed by each of the sets $A$ and $B$, respectively. Condition (\ref{cond::real1}) implies that $S_A < S_B$. 
 
Observe that, by collecting coefficients of each $c_{i}$ and $w_{j}$, we have 
\[S_{A} = \sum_{k=1}^{\ell} C_{a_{k}} =  \sum_{i=1}^{m} |A \cap (\{i\} \times [n]\})|c_{i} + \sum_{j=1}^{n} |A \cap [m] \times \{j\}| w_{j}.\]
The same expression holds for $S_{B}$, so conditions (\ref{cond::real2}) and (\ref{cond::real3}) imply that $S_A = S_B$, a contradiction. Therefore the tableau is not realizable. 
\end{proof}

We conjecture that the presence of the substructures described in Proposition \ref{prop:tabooconfig} is the only obstruction to realizability; see Section \ref{sec:FurDir} for further discussion.

\begin{example}
The $3 \times 3$ tableau 
\[
\begin{blockarray}{ccc}
&&\\
\begin{block}{|c|c|c|}
\hline 1 & \textcolor{red}2 & \textcolor{blue}6\\
\hline\textcolor{blue}3 & 5 & \textcolor{red}7\\
\hline\textcolor{red}4 & \textcolor{blue}8 & 9\\
\hline
\end{block} 
\end{blockarray}
\]
is not realizable.  This follows from Proposition \ref{prop:tabooconfig} by choosing $\textcolor{red}A = \{(1,2),(2,3),(3,1)\}$ and $\textcolor{blue}B = \{(2,1),(3,2),(1,3)\}$.  The entries corresponding to $A$ and $B$ are colored red and blue, respectively. 
\end{example}

\subsection{An upper bound via hyperplane arrangements}\label{subsec:upper-bound-rect}

We consider the following set of hyperplanes in $\mathbb{R}^{m+n}$:
\[ \mathcal{C}_{n,m} = \left\{ x_i + y_j = x_k + y_\ell : i,k \in [n],~ j,\ell \in [m], \text{ and $i\neq k$ or $j \neq \ell$} \right\} ,\]
where the coordinates of a point in $\mathbb{R}^{m+n}$ are denoted by $x_1, \dots, x_m, y_1, \dots, y_n$. Let
\[ \calP_{n,m} = \{(x_1,\dots,x_m,y_1,\dots,y_n) \in \R^{m+n} : x_1 < x_2 < \cdots < x_m,~ y_1 < y_2 < \cdots < y_n\} ,\]
and observe that the realizable $m \times n$ standard Young tableaux are in bijective correspondence with the regions inside $\calP_{n,m}$ after partitioning with the hyperplanes in $\calC_{n,m}$.

Note that $\calC_{n,m}$ is symmetric with respect to the action of $S_m$ on the first $m$ coordinates and $S_n$ on the last $n$ coordinates. Hence, the number of regions $\calC_{n,m}$ splits $\mathbb{R}^{m+n}$ into is $m!n!$ times the number of regions $\calC_{n,m}$ splits $P_{n,m}$ into.  In particular, we rely on the following relation: 

\begin{equation}
\label{eqn:regions}
|\text{rSYT}(m,n)| = \frac{r(\calC_{n,m})}{m!n!}, 
\end{equation}
where $r(\mathcal{H})$ denotes the number of regions of a hyperplane arrangement $\mathcal{H}$.  

We are now ready to prove the upper bound of \cref{thm::upper and lower}, while the lower bound is proved in Subsection~\ref{subsec:single-row-extensions}.

\begin{proof}[Proof of Upper Bound in \cref{thm::upper and lower}]
By the previous discussion, the number of regions of the arrangement $\calC_{n,m}$ contained in the region $\calP_{n,m}$ is equal to the number of $m \times n$ realizable standard Young tableaux.  The number of hyperplanes in the arrangement $\calC_{n,m}$ is precisely $2\binom{m}{2}\binom{n}{2}+\binom{m}{2} + \binom{n}{2}$. The desired upper bound then follows directly by applying the well-known bound that the number of regions formed by partitioning $\R^d$ by $r$ hyperplanes is at most $\sum_{i=0}^{d} \binom{r}{i}$ (see, e.g., \cite[Proposition 2.4]{StanleyIntroduction}) to Equation \ref{eqn:regions}.

This essentially gives us our desired upper bound with some further analysis. By applying this upper bound and the unimodality of binomial coefficients, we have that
\begin{align*}
    |\rSYT(m,n)| &\leq \frac{1}{n!m!} \sum_{i=0}^{n+m} \binom{2\binom{m}{2}\binom{n}{2}+\binom{m}{2} + \binom{n}{2}}{i}\\
    &\leq \frac{n+m}{n!m!} \binom{2\binom{m}{2}\binom{n}{2}+\binom{m}{2} + \binom{n}{2}}{n+m}\\
    &\leq \frac{n+m}{m!n!} \binom{\frac{1}{2}(m^2-m+1)n^{2}}{m+n}\\
    &= \frac{\prod_{i=0}^{m+n-1} (\frac{1}{2}(m^2-m+1)n^{2}-i)}{m!n!(m+n-1)!} \\
    &\leq \frac{(\frac{1}{2}(m^2-m+1)n^2)^{m+n}}{m!n!(m+n-1)!}. 
\end{align*}
Note that, for the second and third  inequalities, we use that $n 
\geq m \geq 3$, which we may assume without loss of generality. We can then use the fact that $\left(\frac{n}{e}\right)^n \leq n!$ (a version of Stirling's approximation) to obtain the following.  
\begin{align*}
    |\rSYT(m,n)|   &\leq \frac{(\frac{1}{2}(m^2-m+1)n^2)^{m+n}}{m!n!(m+n-1)!} \\
    & \leq \left(\frac{m^2-m+1}{2}\right)^{m+n}n^{2(m+n)}  \left(\frac{e}{n}\right)^n  \left(\frac{e}{m}\right)^m  \left(\frac{e}{m+n-1}\right)^{m+n-1}\\
    &\leq \left(\frac{(m^2-m+1)e^2}{2}\right)^{m+n}\frac{n^{m+n}}{(m+n-1)^{(m+n-1)}} \left(\frac{n}{m}\right)^{m} \\
    &\leq \left(\frac{(m^2-m+1)e^2}{2}\right)^{m+n} n \left(\frac{n}{m}\right)^{m}. 
\end{align*}
 
For $m = n$, this upper bound becomes $n^{4n+o(n)}$.
\end{proof}

\begin{proof}[Proof of \cref{cor::prob_to_zero}]

Let $\beta$ denote the number of $m \times n$ standard Young tableaux.  It is well-known (and follows from the hook length formula~\cite{HookLength}) that
$$\beta = (mn)! \prod_{j=0}^{m-1} \frac{j!}{(n+j)!}\,.$$
Using the bound
$$\prod_{j=0}^{m-1} \frac{j!}{(n+j)!} \geq \prod_{j=0}^{m-1} \frac{1}{(n+m)^j n!} = \left((n+m)^{\binom{m}{2}}(n!)^m\right)^{-1}\,$$
along with the fact that $\left(\frac{n}{e}\right)^{n} \leq n! \leq 2n\left(\frac{n}{e}\right)^n$ for all $n \geq 3$, we obtain
\begin{align*}
    \beta &\geq \frac{(mn)!}{(n+m)^{\binom{m}{2}} (n!)^{m}} \geq \frac{(mn/e)^{mn}}{(n+m)^{\binom{m}{2}} (2n)^m (n/e)^{mn}} = \frac{m^{mn}}{(n+m)^{\binom{m}{2}} (2n)^m} .
\end{align*}

We now employ two separate bounds to show the proportion of realizable tableaux tends to $0$, one for $m_n 
\leq 33$ and one for $m_n > 33$. Given a sequence $(m_n)_{n 
\in \mathbb{N}}$, we can split it into two subsequences, one consisting of terms at most $33$ and the remaining terms in the other, so that each bound applies.  To start, we have by Theorem \ref{thm::upper and lower},
\begin{align*}
    \frac{|\rSYT(m,n)|}{\beta} &\leq n \left(\frac{n}{m}\right)^m \left(\frac{(m^2-m+1)e^2}{2}\right)^{m+n} \left( \frac{(n+m)^{\binom{m}{2}}(2n)^m}{m^{mn}}\right) \\
    &= n \left( \frac{2n^2}{m} \right)^m  (n+m)^{\binom{m}{2}}m^{m^2} \left(\frac{(m^2-m+1)e^2}{2m^{m}}\right)^{n+m} \\
    &\leq n \left( \frac{2n^2}{m} \right)^m (2nm^2)^{m^2/2}  \left(\frac{(m^2-m+1)e^2}{2m^{m}}\right)^{n+m}.
\end{align*}
Suppose first that $m \leq 33$ (in fact, any bound on $m$ suffices here). Since $\frac{(m^2-m+1)e^2}{2m^{m}} < 1$ for $m \geq 3$, the exponential term in $n$ dominates the polynomial term in $n$, so since $m \leq 33$, this entire quantity goes to $0$ as $n$ goes to infinity.  This handles the claim for bounded $m$, so we now handle the case of large $m$.  

Suppose $m > 33$. Then, in particular, $m^3 \geq \frac{(m^2-m+1)e^2}{2}$, so we can further simplify the above upper bound as
\begin{align*}
    \frac{|\rSYT(m,n)|}{\beta} \leq n \left( \frac{2n^2}{m} \right)^m (2nm^2)^{m^2/2} m^{-(m-3)(n+m)} = \left(\frac{n^{m/2 + 1/m + 2 +\log_n (2)(m/2+1)}}{m^{n - 3n/m - 2}}\right)^m
\end{align*}

Since $n \geq m \geq 33$, we have \[\frac{m}{2} + \frac{1}{m} + 2 + \log_n(2)(m/2+1)  = m\left(1/2 + 1/m^{2} + 2/m + \frac{\ln(2)}{\ln(n)}(1/2 + 1/m)\right)\leq \frac{2m}{3}.\]
Using the fact that $n \geq m$ implies $n^m \leq m^n$, we have
\begin{align*}
\frac{n^{m/2 + 1/m + 2 +\log_n (2)(m/2+1)}}{m^{n(1-3/m) - 2}} &\leq \frac{n^{2m/3}}{m^{n - 3n/m - 2}} 
&\leq \frac{m^{2n/3}}{m^{n - 3n/m - 2}}
&= \frac{1}{m^{n/3 - 3n/m - 2}}
&\leq \frac{1}{m^{8n/33 - 2}}
&\leq \frac{1}{m^{6n/33}}\,.
\end{align*}
We can therefore conclude in the case $m > 33$ that $\frac{|\rSYT(m,n)|}{\beta} \leq m^{-6(mn)/33}$, which also decays exponentially as $n$ goes to infinity.  Combining this with the bound for $m \leq 33$ proves the claim.
\end{proof}

\subsection{Single-row extensions and a lower bound}\label{subsec:single-row-extensions}
To prove the lower bound from \cref{thm::upper and lower}, we introduce \emph{single-row extensions} of realizable rectangular tableaux and apply a recursive argument.

\begin{definition}Given a rectangular tableau $T \in \rSYT(m-1,n)$, a {\bf single-row extension} of $T$ is a rectangular tableau $T' \in \rSYT(m,n)$ such that when the bottom row of $T'$ is removed, the relative order of the remaining entries corresponds to $T$. 
\end{definition}

\begin{lem}\label{lem:min-extensions}
Any tableau $T \in \rSYT(m-1,n)$ has at least $\binom n 2 + 1$ single-row extensions. 
\end{lem}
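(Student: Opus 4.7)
The plan is to start from a generic realization of $T$ by vectors $x \in \R^{m-1}$ and $y \in \R^n$, introduce a new parameter $x_m > x_{m-1}$, and count the distinct realizable single-row extensions of $T$ produced by the outer sum $(x_1,\dots,x_{m-1},x_m) \circ y$ as $x_m$ varies over the ray $(x_{m-1}, \infty)$.

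First I would choose $(x,y)$ realizing $T$ and sufficiently generic that the $\binom{n}{2}$ differences $y_k - y_j$ with $1 \le j < k \le n$ are pairwise distinct. Such a choice is possible: since $T$ is realizable there is an open set of valid $(x,y)$, and each condition $y_{k_1} - y_{j_1} = y_{k_2} - y_{j_2}$ cuts out a proper affine subspace of that open set, so the complement of these finitely many subspaces remains nonempty. For any $x_m > x_{m-1}$ avoiding the finitely many "comparison values" $x_i + y_k - y_j$, the outer sum $(x_1,\dots,x_{m-1},x_m) \circ y$ produces an $m \times n$ matrix whose associated tableau $T'$ is realizable by construction; since the top $(m-1) \times n$ block of this matrix is exactly $x \circ y$, the tableau $T'$ is a single-row extension of $T$.

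Next I would isolate the $\binom{n}{2}$ critical values arising from $i = m-1$ and $j < k$, namely $x_m = x_{m-1} + y_k - y_j$. These all lie strictly above $x_{m-1}$ and are pairwise distinct by genericity of $y$, so they partition $(x_{m-1},\infty)$ into $\binom{n}{2}+1$ open intervals. When $x_m$ crosses such a critical value, the relative order of the entry $x_m + y_j$ (at position $(m,j)$ of the outer sum) and the entry $x_{m-1} + y_k$ (at position $(m-1,k)$) flips from $<$ to $>$, changing the tableau. Crucially, each new-row entry $x_m + y_j$ is strictly increasing in $x_m$ while all other entries are constant, so every such flip is monotone and none can ever be reversed as $x_m$ increases further. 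Hence tableaux from any two distinct intervals disagree on the relative order of at least one such pair and must be distinct, yielding at least $\binom{n}{2}+1$ different realizable single-row extensions of $T$. (Critical values coming from $i < m-1$ only further subdivide these intervals, which can only add more extensions.)

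The main point to be careful about is the genericity step that ensures the $\binom{n}{2}$ critical values coming from the last row are distinct; once this is established, the monotonicity of the new-row entries in $x_m$ makes the distinctness of the resulting tableaux, and their realizability, essentially automatic.
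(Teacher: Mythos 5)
Your proof is correct and takes essentially the same approach as the paper: fix a generic realization $(x,y)$ of $T$ with all differences $y_k - y_j$ distinct, append $x_m > x_{m-1}$, and observe that the $\binom{n}{2}$ critical values $x_{m-1} + (y_k - y_j)$ cut the ray $(x_{m-1},\infty)$ into $\binom{n}{2}+1$ intervals, each producing a distinct realizable single-row extension. You spell out the monotonicity argument for why distinct intervals give distinct tableaux a bit more explicitly than the paper does, but the underlying idea is identical.
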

\begin{proof}Fix increasing $x = (x_1,x_2,\dots,x_{m-1})$ and $y = (y_1,y_2,\dots,y_n)$ such that $\tabfunc(x\circ y) = T$.
Since the set of $x$ and $y$ such that $x \circ y$ realizes a particular tableau is a full dimensional region of the fan given by subdividing $\calP_{n,m}$ by $\calC_{n,m}$, we can assume that differences of the form $y_i-y_j$ for $i > j$ are all distinct, because they may be chosen generically from a full dimensional region of a hyperplane arrangement. 

Let $x_m$ be a real number greater than $x_{m-1}$, and let $x' = (x_1,x_2,\dots,x_{m-1}, x_m)  \in \mathbb{R}^m$. The relative position of $x_m - x_{m-1}$ among the positive differences $y_i - y_j$ for $i>j$ determines if the $(m-1,i)$ entry of $\tabfunc(x'\circ y)$ comes before or after the $(m,j)$ entry. Thus, each of these $\binom{n}{2}+1$ relative positions of $x_m - x_{m-1}$ yields a different extension of $x+y$ to a realizable $m \times n$ tableau. \end{proof}

We are now ready to prove the lower bound of \cref{thm::upper and lower}, while the upper bound was handled in Subsection~\ref{subsec:upper-bound-rect}.

\begin{proof}[Proof of Lower Bound in \cref{thm::upper and lower}]
By Lemma~\ref{lem:min-extensions}, every $(m-1) \times n$ tableau has at least $\binom{n}{2}+1$ single-row extensions, yielding the recursion. The lower bound in the second statement of Theorem~\ref{thm::upper and lower} follows from a repeated application of this recursion together with the observation that $|\rSYT(m,n)| = |\rSYT(n,m)|$; in particular, we have
\begin{align*} |\rSYT(n,n)| &\geq |\rSYT(n-1,n)|\left( \binom{n}{2} + 1 \right)\\
&\geq |\rSYT(n-1,n-1)| \left( \binom{n-1}{2} + 1 \right) \left( \binom{n}{2} + 1 \right) .\qedhere\end{align*}\end{proof}

While Lemma~\ref{lem:min-extensions} is sufficient for proving the lower bound in Theorem~\ref{thm::upper and lower}, it is a special case of the following more general result. 

\begin{prop}\label{prop:general-extensions}
Let $x = (x_1,\dots,x_{m-1})\in \R^{m-1}$ and $y = (y_1,\dots,y_n)\in \R^n$. Suppose that $x$ and $y$ are increasing and all $x_{i} + y_{j}-y_{k}$ are pairwise distinct across all pairs $j \neq k$, and let $T = \tabfunc(x \circ y)$. Then the size of the set
\[\left\{\tabfunc(x' \circ y) \mid x' = (x_1,\dots,x_{m-1},x_m)\text{ where } x_m > x_{m-1}\right\}\]is precisely
\[n^2(m-1) + 1 - (\text{sum of entries in the bottom row of $T$}).\]
In particular, this observation holds for a generic choice of $x$ and $y$.
\end{prop}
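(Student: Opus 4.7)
The plan is to sweep $x_m$ from just above $x_{m-1}$ to $+\infty$ and count the tableaux $\tabfunc(x'\circ y)$ that occur along the way. Throughout the sweep the bottom-row values $x_m + y_1 < x_m + y_2 < \cdots < x_m + y_n$ translate rigidly to the right, while the $(m-1)n$ values $x_i + y_k$ with $i<m$ stay fixed, so the bottom-row entries never cross one another. Hence $\tabfunc(x'\circ y)$ changes exactly when some $x_m + y_j$ crosses some $x_i + y_k$, which happens at the critical value $x_m = x_i + y_k - y_j$. By genericity the $(m-1)n^2$ critical values are pairwise distinct and distinct from $x_{m-1}$, so each event is a single adjacent transposition in the total order. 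Moreover, since $x_m + y_j$ is strictly increasing in $x_m$ it can overtake a given $x_i + y_k$ at most once, so no two segments of the sweep yield the same tableau. Therefore the number of tableaux encountered is $N+1$, where $N$ is the number of critical values lying in $(x_{m-1},+\infty)$.

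Next I would compute $N$ combinatorially. The critical value $x_i + y_k - y_j$ exceeds $x_{m-1}$ iff $x_{m-1} + y_j < x_i + y_k$, and since $\tabfunc(x\circ y) = T$ this is equivalent to the comparison $T(m-1,j) < T(i,k)$ on the entries of $T$; in particular the degenerate case $(i,k)=(m-1,j)$ is automatically excluded. So $N$ equals the number of pairs consisting of a bottom-row cell $(m-1,j)$ and any cell $(i,k)$ of $T$ for which the entry of $T$ at $(i,k)$ strictly exceeds the entry at $(m-1,j)$. Fixing $j$, there are exactly $(m-1)n - T(m-1,j)$ such cells, and therefore
\[ N \;=\; \sum_{j=1}^{n}\bigl((m-1)n - T(m-1,j)\bigr) \;=\; (m-1)n^2 - S, \]
where $S$ is the sum of the bottom-row entries of $T$. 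Adding $1$ for the initial segment yields the desired count $n^2(m-1)+1-S$.

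The only delicate step is justifying the genericity claim: one must verify that for generic $(x,y)$ the $(m-1)n^2$ numbers $\{x_i + y_k - y_j\}$ really are pairwise distinct and distinct from $x_{m-1}$. Each violation is a codimension-one algebraic condition on $(x,y)$, so a small perturbation of $x$ and $y$ that preserves $T$ avoids all of them, and the quantity being counted is locally constant on the complement of these conditions. As a sanity check, when $m=2$ the tableau $T$ is the single row $1,2,\dots,n$, so $S = \binom{n+1}{2}$ and the count reduces to $n^2+1-\binom{n+1}{2} = \binom{n}{2}+1$, recovering Lemma~\ref{lem:min-extensions}.
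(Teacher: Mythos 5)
Your proposal is correct and follows essentially the same route as the paper: sweep $x_m$ upward, identify the critical values $x_i + y_k - y_j$ at which the order changes, convert the condition $x_i + y_k - y_j > x_{m-1}$ into the comparison $T(m-1,j) < T(i,k)$, and count cells exceeding each bottom-row entry to get $(m-1)n^2 - S$ critical values, hence $(m-1)n^2 - S + 1$ tableaux. Your version is a bit more explicit than the paper's about why the sweep produces pairwise distinct tableaux (each $x_m + y_j$ overtakes a fixed $x_i + y_k$ at most once), but the argument and the counting are the same.
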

\begin{proof} The general method of the proof is similar to the proof of Lemma~\ref{lem:min-extensions}, but we now consider all coordinates of $x$ instead of just $x_{m-1}$. 

Each possible tableau $\tabfunc(x' \circ y)$ is in correspondence with a choice of relative position for $x_m$ among the values $x_i + y_j - y_k$ for $(i,j,k) \in [m-1]\times[n]\times[n]$. These positions are distinct by hypothesis. Thus, there are $n^2(m-1) + 1$ possible relative positions for $x_{m}$, but we must also take into consideration the assumption that $x_m > x_{m-1}$.  Then we must ignore the values of $x_i + y_j - y_k$ that are less than $x_{m-1}$, as $x_m$ cannot be chosen to be less than these values.  For each fixed $k$, this number of $x_i + y_j - y_k$ that are less than $x_{m-1}$ is equal to the $(m-1,k)$-entry of $\tabfunc(x \circ y)$.  Iterating over all values of $k \in [n]$, we see that the total number of $x_i + y_j - y_k$ that are less than $x_{m-1}$ is the sum of the entries in the bottom row of $\tabfunc(x \circ y)$.  The result then follows by subtracting this sum from the total possible positions among all $x_i + y_j - y_k$ for $(i,j,k) \in [m-1]\times[n]\times[n]$.
\end{proof}
We note that better lower bounds for small $m$ can be found by applying \cite[Theorem 7]{MallowsVanderbei}. 

It is straightforward to show that the number of extensions found in Proposition~\ref{prop:general-extensions} ranges from $\binom{n}{2}+1$, when the last row of $T$ is maximized, to $\binom{n}{2}(m-1)+1$ when the last row of $T$ is minimized. 

One limitation of Proposition~\ref{prop:general-extensions} is that it requires a specific choice of vectors $x$ and $y$. While the resulting number of tableaux does not depend on this choice, the tableaux obtained will vary. In particular, the total number of single-row extensions of $T$ may be larger than the value obtained from Proposition~\ref{prop:general-extensions}.

For an extreme example of this phenomenon, let $T$ be the $(m-1) \times n$ tableau with entries ${T_{ij} = (j-1)(m-1)+i}$, that is, the transpose of the $n \times (m-1)$ superstandard tableau. In particular, this is the unique tableau such that for $j \not= j'$, we have $T_{ij} > T_{i'j'}$ if and only if $j>j'$. Notice that $T \in \rSYT(m-1,n)$ because $T = \tabfunc(x \circ y)$ whenever $y_i - y_j > x_k - x_\ell$ for all $i> j$ and $k>l$. 

\begin{prop}\label{prop:manyextensions} Let $T$ be the $(m-1) \times n$ tableau defined in the previous paragraph. There are at least $C_n$ single-row extensions of $T$, where $C_n$ is the $n^{th}$ Catalan number. 
\end{prop}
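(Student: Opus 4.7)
The plan is to leverage \cref{prop:2byn}, which asserts that every $2 \times n$ standard Young tableau is realizable. For each of the $C_n$ tableaux $\sigma \in \rSYT(2,n)$, I will construct a single-row extension $T'$ of $T$ whose restriction to rows $1$ and $m$ reproduces $\sigma$. Since this restriction is determined by $T'$, distinct $\sigma$ must yield distinct extensions, giving the bound $|\{\text{extensions of }T\}| \geq C_n$.

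The construction goes as follows. Given $\sigma \in \rSYT(2,n)$, realize it as $\sigma = \tabfunc((0, \beta) \circ y^0)$ for some $\beta > 0$ and increasing $y^0 \in \R^n$. Fix a small $\delta > 0$ and choose a scaling factor $\mu > 0$ large enough that $\mu(y^0_j - y^0_{j-1}) > (m-2)\delta$ for every $j$ and $\mu\beta > (m-2)\delta$. Set $y := \mu y^0$ and $x := (0, \delta, 2\delta, \dots, (m-2)\delta, \mu\beta) \in \R^m$.

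The first $m-1$ coordinates of $x$ have total spread $(m-2)\delta$, which is smaller than every $y$-gap, so the characterization of $T$ from the paragraph preceding the proposition yields $\tabfunc((x_1,\dots,x_{m-1}) \circ y) = T$. Meanwhile, positive scaling preserves order, so $\tabfunc((x_1, x_m) \circ y) = \tabfunc((0, \mu\beta) \circ \mu y^0) = \sigma$. Therefore $T' := \tabfunc(x \circ y)$ lies in $\rSYT(m,n)$, is a single-row extension of $T$, and induces $\sigma$ on rows $1$ and $m$.

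The subtlety, and the step I expect to be the main obstacle, is reconciling the two requirements on $T'$: rows $1$ through $m-1$ must reproduce $T$, which forces every $y$-gap to dominate the spread $x_{m-1}-x_1$, while rows $1$ and $m$ must reproduce an arbitrary $\sigma$, which prescribes exactly how the single gap $x_m - x_1$ interleaves with differences of $y$-coordinates. The scaling trick decouples these requirements: inflating $y$ and $x_m - x_1$ simultaneously by the factor $\mu$ makes the $y$-gaps enormous on the scale of the first $m-1$ $x$-gaps without disturbing the $2 \times n$ pattern cut out by comparing $x_m - x_1$ against differences $y_j - y_i$.
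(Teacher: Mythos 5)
Your proposal is correct and takes essentially the same approach as the paper: compress the first $m-1$ coordinates of $x$ to be tiny relative to the $y$-gaps so that rows $1$ through $m-1$ reproduce $T$, while placing $x_m$ so that rows $1$ and $m$ reproduce an arbitrary $\sigma \in \rSYT(2,n)$, then recover $\sigma$ from the extension to get injectivity. The paper achieves the same scale separation by shrinking an $\epsilon$-perturbation relative to a fixed realization $(0,x')\circ y'$, while you inflate $y$ by $\mu$ relative to a fixed $\delta$; these are equivalent after rescaling, and both choices also implicitly avoid ties in the outer sum.
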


\begin{proof}
To prove this proposition, we give an injective map from elements of $\rSYT(2,n)$ to single row extensions of $T$. The result then follows from the observation that $|\text{rSYT}(2,n)| = C_{n}$, see \cite[Theorem 7]{MallowsVanderbei}. An illustration of the method used here is shown in Figure~\ref{fig:extension}.

Fix $T' \in \rSYT(2,n)$ and choose a positive $x'\in \R$ and an increasing $y'\in \R^n$ such that $\tabfunc((0,x') \circ y') = T'$. We will now define an increasing $x''=(x''_1 = 0,x''_2,\dots,x''_{m-1}, x''_m =x') \in \R^m$ such that $\tabfunc(x''\circ y')$ is a single-row extension of $T$. 

Let $\epsilon>0$ be smaller than the minimal difference between the entries of $(0,x') \circ y'$. For $1<i<m$, let $x''_i = (i-1)\epsilon/(m-1)$. Finally, let $T'' = \tabfunc(x'' \circ y')$. See Figure~\ref{fig:extension} for an example of this extension. 

By construction, for $i,i' \in [m-1]$ and $j \not= j' \in [n]$, we have $T''_{ij} > T''_{i'j'}$ if and only if $T''_{1j} > T''_{1j'}$, which holds precisely when $j>j'$. In particular, $T''$ is a single-row extension of $T$. Furthermore, for $i \in [m-1]$ and $j \not= j' \in [n]$, it is also immediate that $T''_{mj} > T''_{ij'}$ if and only if $T'_{2j}>T'_{1j'}$. This implies that for each choice of $T'\in \rSYT(2,n)$, we obtain a distinct $T'' \in \rSYT(m,n)$. \end{proof}

\begin{figure}
\[ T = \begin{blockarray}{ccccc}
&&&&\\
\begin{block}{|c|c|c|c|c|}
\hline 1 & 5 & 9  & 13 & 17\\
\hline 2 & 6 & 10 & 14 & 18\\
\hline 3 & 7 & 11 & 15 & 19\\
\hline 4 & 8 & 12 & 16 & 20\\
\hline
\end{block}
\end{blockarray}\hspace {1 cm}\begin{blockarray}{cccccc}
& 0 & 1 & 5 & 10 & 12\\
\begin{block}{c(ccccc)}
0 & 0 & 1 & 5 & 10 & 12\\
6 & 6 & 7 & 11 & 16 & 18\\
\end{block} 
\end{blockarray}
\hspace{.2 cm}\pmb{\Rightarrow}\hspace{.2 cm} T' = 
\begin{blockarray}{ccccc}
&&&&\\
\begin{block}{|c|c|c|c|c|}
\hline 1 & 2 & 3 & 6 & 8\\
\hline 4 & 5 & 7 & 9 & 10\\
\hline
\end{block}
\end{blockarray}\]

\[\begin{blockarray}{cccccc}
& 0 & 1 & 5 & 10 & 12\\
\begin{block}{c(ccccc)}
0 & 0 & 1 & 5 & 10 & 12\\
\epsilon/4 & \epsilon/4 & 1+\epsilon/4 & 5+\epsilon/4 & 10+\epsilon/4 & 12+\epsilon/4\\
\epsilon/2 & \epsilon/2 & 1+\epsilon/2 & 5+\epsilon/2 & 10+\epsilon/2 & 12+\epsilon/2\\
3\epsilon/4 & 3\epsilon/4 & 1+3\epsilon/4 & 5+3\epsilon/4 & 10+3\epsilon/4 & 12+3\epsilon/4\\
6 & 6 & 7 & 11 & 16 & 18\\
\end{block}
\end{blockarray}
\hspace{.2 cm}\pmb{\Rightarrow}\hspace{.2 cm} T'' = 
\begin{blockarray}{ccccc}
&&&&\\
\begin{block}{|c|c|c|c|c|}
\hline 1 & 5 & 9 & 15 & 20\\
\hline 2 & 6 & 10 & 16 & 21\\
\hline 3 & 7 & 11 & 17 & 22\\
\hline 4 & 8 & 12 & 18 & 23\\
\hline 13 & 14 & 19 & 24 & 25\\
\hline
\end{block} 
\end{blockarray}
\]
 \caption{This figure shows how to construct a different single-row extension of the tableau $T$ in the upper left for each $T' \in \rSYT(2,5)$. This construction is used in the proof of Proposition~\ref{prop:manyextensions}.}\label{fig:extension}
\end{figure}

In light of Propositions \ref{prop:general-extensions} and \ref{prop:manyextensions}, it seems an approach to sorting $X+Y$ could be in adding a single column one at a time and managing to merge the two sorted lists efficiently. Clearly one can do this using $O(n)$ binary searches, which would take $O(n\log(n))$ time, but leveraging the realizability may allow one to do better. 


\section{Staircase Tableaux} \label{sec:staircase}
To bound the number of realizable staircase tableaux, we leverage their equivalence to geometrically realizable sorting networks. A \emph{sorting network} is a sequence $\sigma_{0}, \sigma_{1}, \dots, \sigma_{\binom{n}{2}}$ of permutations such that $\sigma_{0}$ is the identity, $\sigma_{\binom{n}{2}}$ is the order reversing permutation, $\sigma_{i+1} = s_{k} \sigma_{i}$ for some adjacent transposition $s_{k}$, and $\sigma_{i+1}$ has more inversions than $\sigma_{i}$ for all $0 \leq i \leq \binom{n}{2}-1$. Sorting networks correspond to staircase standard Young tableaux of length $n-1$ via the Edelman-Greene bijection~\cite{EdelmanGreene}. We are interested in a certain subclass of sorting networks for comparison. 

\begin{definition}\label{def::geom_realizable}
Consider a subset $X \subseteq \R^{2}$ with $|X| = n$ such that the slope between any pair of points is distinct and no two have the same first coordinate. Then $X$ can be totally ordered by the first coordinate, i.e., the dot product with the vector $(1,0)$. Furthermore, the set of total orderings of the point configuration induced by taking the dot product with a vector $z_{\theta} = (\cos(\theta), \sin(\theta))$ as $\theta$ ranges from $0$ to $\pi$ can be viewed a sequence of permutations by comparing with the initial total order.  Since these permutations differ by a transposition as $\theta$ increases, this yields a sorting network. A sorting network that arises from some subset $X \subseteq \R^{2}$ in this way is called {\bf geometrically realizable}. See Figure \ref{fig:wiringdiagram} for an example of a geometrically realizable sorting network together with a set of points in $\R^{2}$ that realize it.

\end{definition}

\begin{figure}
    \centering
    \[\begin{tikzpicture}
    \draw (-3,.5) node[blue] {$(0,0)$};
    \draw (-2,2.5) node[red] {$(1,2)$};
    \draw (-1,1.5) node[black] {$(2,1)$};
    \draw (-3,0) node[blue, circle,fill, inner sep = 1.5pt] {};
    \draw (-2,2) node[red, circle,fill, inner sep = 1.5pt] {};
    \draw (-1,1) node[black, circle,fill, inner sep = 1.5pt] {};
    \draw (.5,0) node[black] {$3$};
    \draw (.5,1) node[red] {$2$};
    \draw (.5,2) node[blue] {$1$};
    \draw (8.5,2) node[black] {$3$};
    \draw (8.5,1) node[red] {$2$};
    \draw (8.5,0) node[blue] {$1$};
    \draw[black, thick] (1,0) -- (2,0);
    \draw[red, thick] (1,1) -- (2,1);
    \draw[blue, thick] (1,2) -- (2,2);  
    \draw[black, thick] (2,0) -- (3,1);
    \draw[red, thick] (2,1) -- (3,0);
    \draw[blue, thick] (2,2) -- (3,2); 
    \draw[black, thick] (3,1) -- (4,1);
    \draw[red, thick] (3,0) -- (4,0);
    \draw[blue, thick] (3,2) -- (4,2); 
    \draw[black, thick] (4,1) -- (5,2);
    \draw[red, thick] (4,0) -- (5,0);
    \draw[blue, thick] (4,2) -- (5,1); 
    \draw[black, thick] (5,2) -- (6,2);
    \draw[red, thick] (5,0) -- (6,0);
    \draw[blue, thick] (5,1) -- (6,1); 
    \draw[black, thick] (6,2) -- (7,2);
    \draw[red, thick] (6,0) -- (7,1);
    \draw[blue, thick] (6,1) -- (7,0);
    \draw[black, thick] (7,2) -- (8,2);
    \draw[red, thick] (7,1) -- (8,1);
    \draw[blue, thick] (7,0) -- (8,0);
\end{tikzpicture}\]
    \caption{Sorting networks are often represented by a \emph{wiring diagram}, which records the set of swaps going from one permutation to another as crossing of wires from left to right. The figure depicts the wiring diagram for the sorting network $(1,2,3) \to (1,3,2) \to (2,3,1) \to (3,2,1)$. This sorting network is geometrically realizable with realization given by the points $(0,0), (1,2),$ and $(2,1)$ on the left side of the image. Namely, the orderings are induced by linear functions as follows: $(1,0)^{T}x \to (1,2,3)$, $(0,1)^{T}x \to (1,3,2)$, $(-1,1)^{T}x \to (2,3,1)$, and finally $(-1,0)^{T} x \to (3,2,1)$. }
    \label{fig:wiringdiagram}
\end{figure}

Let $X = \{v^{1}, v^{2}, \dots, v^{n}\} \subseteq \R^{2}$ be a set of points satisfying the conditions of \cref{def::geom_realizable}. Then each ordering is determined by whether $z_{\theta}^{T}v^{i} > z_{\theta}^{T} v^{j}$ for each $i < j \in [n]$. In particular, consider the hyperplane arrangement $H$ with normals given by $\{v^{i} - v^{j}: 1 \leq i < j \leq n\}$. Thus, the regions of this arrangement correspond to total orderings of $v^{1}, v^{2}, \dots, v^{n}$. In particular, the ordering induced by $z_{\theta}$ is determined by which region of the hyperplane arrangement $z_{\theta}$ is contained in.

Consider the polytope $Z = \sum_{i= 1}^{n}\sum_{j = 1}^{n} [v^{i} - v^{j}, v^{j} - v^{i}]$, which  is dual to the hyperplane arrangement $H$. Let $V(Z)$ denote the set of vertices of $Z$. The \emph{upper path }on $Z$ is the path traced by the set of vertices
\[\{v \in V(Z): \text{ there exists } (a,b) \in \R \times \R_{\geq 0} \text{ such that } v = \text{argmax}_{x \in Z} (a,b) \cdot x \}.\] 
Since $Z$ is dual to $H$, the upper path corresponds to the set of regions of $H$ containing a vector with a non-negative second coordinate. These are precisely the set of regions containing $z_{\theta}$ for some choice of $\theta \in [0, \pi]$. Hence, the upper path on $Z$ is determined by a corresponding geometrically realizable sorting network. We will use this observation to define a geometric space that we may use to upper bound the number of geometrically realizable sorting networks.

\begin{lem} \label{lem:hsurf-orderings}
The number of realizable staircase tableaux is at most the number of different possible total orderings of $\{(i,j): i < j\}$ across all $c, w \in \R^{n}$ induced by the relation that
\[(i,j) <_{c,w} (k, \ell)\quad\text{ if }\quad\frac{c_{i} - c_{j}}{w_{i} - w_{j}} < \frac{c_{k} - c_{\ell}}{w_{k} - w_{\ell}}.\]
In particular, $|\rSCYT(n)|$ is at most the number of regions of the arrangement of quadratic hypersurfaces $\{(w_{k} - w_{\ell})(c_{i} - c_{j}) - (w_{i}-w_{j})(c_{k}-c_{\ell}): 1 \leq i < j \leq n \text{ and } 1 \leq k < \ell \leq n\}$ containing a point such that $w_{1} < w_{2} < \dots <w_{n}$.
\end{lem}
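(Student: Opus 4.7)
The proof will be essentially a bookkeeping argument formalizing the geometric picture developed in the preceding paragraphs. First I would start with any realizable staircase tableau $T \in \rSYT(n)$, which by the Edelman–Greene bijection corresponds to a geometrically realizable sorting network, and thus (by definition) to some generic point configuration $X = \{v^1, \dots, v^n\} \subseteq \R^2$. The preceding discussion already shows that $T$ is determined by the upper path of the zonogon $Z = \pi(\Pi_n)$, which in turn is determined by the slope ordering of the edge directions $v^i - v^j$, which in coordinates is precisely the ordering $(i,j) < (k,\ell)$ iff $\tfrac{c_i - c_j}{w_i - w_j} < \tfrac{c_k - c_\ell}{w_k - w_\ell}$ induced by the pair $(c,w)$ defining $\pi$.

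For the first (``at most'') statement, I would define a map sending each realizable $T$ to the slope ordering produced by any one of its realizers $(c,w)$. The content of the preceding paragraphs is that this slope ordering determines the upper path on $Z$, hence the sorting network, hence $T$; so this map is well defined and injective, giving the inequality. I would be careful to note that, up to a generic perturbation of $X$, we may assume that no two slopes coincide and that no $w_i$ equals another, so the ordering is a total order and the corresponding point $(c,w)$ lies in a full-dimensional region of the arrangement of hyperplanes $(c_i - c_j)(w_k - w_\ell) = (c_k - c_\ell)(w_i - w_j)$.

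For the ``precisely'' statement, I would use the freedom to relabel coordinates, which was already invoked in the preceding paragraph: by permuting the indices we may assume $w_1 < w_2 < \cdots < w_n$, and this relabeling changes neither $T$ nor the combinatorial type of the slope ordering. Conversely, given any region of the slope arrangement meeting the cone $\{w_1 < \cdots < w_n\}$, pick a representative $(c,w)$ there; this gives rise to a point configuration $v^i = (w_i, c_i)$, to an upper path on $\pi(\Pi_n)$, and hence to a realizable staircase tableau. Two points in the same such region produce the same slope order, hence the same upper path, hence the same tableau; two points in different regions produce different slope orders and hence different tableaux (since the upper path recovers the slope order from its edge sequence). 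This establishes a bijection between realizable staircase tableaux and regions of the arrangement intersecting the cone $\{w_1 < \cdots < w_n\}$.

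The main point requiring a little care is the invariance under reordering: one must verify that the ambiguity in choosing which coordinate ordering of $X$ to call $w_1 < \cdots < w_n$ does not create multiple regions mapping to the same $T$, which is handled by fixing this ordering once and for all. Everything else is a direct translation of the geometric setup that has already been established.
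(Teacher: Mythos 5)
Your write-up follows the same route as the paper, which presents the lemma as an immediate consequence of the geometric discussion preceding it: a realizer $(c,w)$ determines the slope ordering, which determines the upper path on the zonogon $Z = \pi(\Pi_n)$, which determines the sorting network and hence the tableau; you simply make this chain and its converse explicit. The one point both you and the paper gloss over is the step ``two points in different regions produce different slope orders'': since the $H_{i,j,k,\ell}$ are quadrics rather than hyperplanes, distinct connected components of the complement could in principle carry the same sign vector, so what is actually established is that $|\rSYT(n)|$ equals the number of realizable \emph{sign patterns}, which is bounded above by (but not obviously equal to) the number of regions --- harmless for the upper bound this lemma feeds into, but slightly weaker than the stated ``precisely.''
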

\begin{proof}
 Define a linear map $\pi: \R^{n} \to \R^{2}$ by $\pi(e_{i}) = v^{i}$. Then $Z = \pi\left(\sum_{i=1}^{n} [e_{i} - e_{j}, e_{j} - e_{i}]\right) = \pi(\Pi_{n})$, where $\Pi_{n}$ is normally equivalent to the permutahedron. Since $\pi$ is linear, there exist $c, w \in \R^{n}$ such that $\pi(x) = (w^{T} x, c^{T}x)$. Furthermore, since $(1,0)$ imposes a total order on $X$, we may up to reordering of indices assume that $w_{1} < w_{2} < \dots < w_{n}$. The upper path then corresponds to a path from a vertex with minimal first coordinate to one with maximal first coordinate. Since $Z$ is centrally-symmetric, i.e., $Z = -Z$, this path uses all edge directions $v^{i} -v^{j}$ exactly once. The ordering of the edges that are used then determines the path. At each step, the slope of an edge in the path must decrease. Thus, the total ordering on edges in the path is determined by the total ordering on the slopes $\left\{\frac{c_{i} - c_{j}}{{w_{i} - w_{j}}}: 1 \leq j < i \leq n \right\}$. 

Hence, the set of geometrically realizable sorting networks is always given by a total ordering on slopes by some choice of $c$ and $w$. 
\end{proof}

Note that $\frac{c_{i} - c_{j}}{w_{i} - w_{j}} < \frac{c_{k} - c_{\ell}}{w_{k} - w_{\ell}}$ if and only if $(c_{i} - c_{j})(w_{k} - w_{\ell}) - (c_{k} - c_{\ell})(w_{i} - w_{j}) < 0$, assuming the coordinates of $w$ are increasing. Hence, the set of possible orderings from \cref{lem:hsurf-orderings} is the number of regions of the hypersurface arrangement with hypersurfaces
\[H_{i,j,k,\ell} = \{c, w \in \R^{n}: (c_{i} - c_{j})(w_{k} - w_{\ell}) - (c_{k} - c_{\ell})(w_{i} - w_{j}) = 0\},\]
for every $1 \leq i < j \leq n$, $1 \leq k < \ell \leq n$ and $(i,j) \neq (k,\ell)$. Furthermore, for a geometrically realizable sorting network, we may always assume that the coordinates of $w_{i}$ are increasing. Hence, to bound the number of geometrically realizable sorting networks or equivalently, the number of realizable staircase tableaux, it suffices to bound the number of regions of this hypersurface arrangement when intersected with the cone $\{(c,w): w_{i} < w_{i+1} \text{ for all } 1 \leq i \leq n\}$. 

To do this, we rely on the following well-known result.

\begin{theorem}[Milnor \cite{Milnor}, Thom \cite{Thom}, Warren \cite{Warren}] \label{thm:bigmachine}
Given a set of $N$ polynomials in $\mathbb{R}[x_{1}, \dots, x_{t}]$ of degree at most $D$, the number of different regions of the corresponding hypersurface arrangement is at most
\[\left(\frac{4eD N}{t}\right)^{t}. \]
\end{theorem}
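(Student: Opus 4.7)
The plan is to follow Warren's classical approach: bound the number of regions by counting \emph{vertices} of the arrangement (points where $k$ of the polynomials simultaneously vanish) and controlling how many regions each vertex can touch. As a first step, by an arbitrarily small generic perturbation of the coefficients of $p_1,\dots,p_N$---which only increases the number of regions---I would reduce to the \emph{general position} case in which any $k$ of the polynomials intersect transversely in finitely many points and no $k+1$ of them share a common zero.

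Next, I would establish the vertex-to-region correspondence. For a generic base point $x_0$, the squared distance $f(x)=\|x-x_0\|^2$ is a Morse function on the smooth strata of the complement, and its minimum on the closure of each bounded region must occur at a vertex of the arrangement. Unbounded regions are reduced to the bounded case by intersecting with a large ball and treating the boundary sphere as an auxiliary degree-$2$ hypersurface, which inflates the total count by only a controlled factor. The counting step then combines two ingredients: B\'ezout's theorem applied over $\mathbb{C}$ gives at most $D^k$ real common zeros per $k$-subset, yielding at most $\binom{N}{k}D^k$ vertices total; and general position implies each vertex is incident to at most $2^k$ regions (locally the arrangement resembles $k$ transverse smooth hypersurfaces, cutting a neighborhood into $2^k$ sign-pattern cells). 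Multiplying and applying the elementary estimate $\binom{N}{k}\le (eN/k)^k$ yields a bound of the desired form $(CeDN/k)^k$ for an absolute constant $C$.

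The main obstacle is pinning down the constant $4e$: a naive accounting produces something closer to $(2eDN/k)^k$, and sharpening to the stated constant requires careful bookkeeping for unbounded regions, for the extra regions created by the compactifying sphere, and for multiplicity contributions at near-degenerate intersections. A conceptually cleaner but less explicit alternative is to invoke Milnor's bound on the sum of Betti numbers of the real variety $\{\prod_i p_i=0\}$ directly---giving a crude first estimate of order $(2DN)^k$---and then to sharpen the dependence on $k$ by restricting the counting to generic $k$-tuples of polynomials, which recovers the crucial $1/k^k$ factor.
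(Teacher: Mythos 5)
The paper does not supply a proof of this statement; it is cited as a classical result of Milnor, Thom, and Warren, so there is no in-paper argument to compare against. Judged on its own, your sketch contains a genuine gap at the vertex-to-region correspondence.

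The claim that, for a generic base point $x_0$, the minimum of $\|x-x_0\|^2$ over the closure of each bounded region occurs at a \emph{vertex} (a common zero of $k$ of the polynomials) is false. Take $N=1$, $k=2$, $D=2$, with $p_1 = x_1^2 + x_2^2 - 1$: the complement of the circle has two regions and the arrangement has no vertices at all, so your count $\binom{N}{k}D^k 2^k = 0$ misses everything. More generally, a region can be bounded by fewer than $k$ of the hypersurfaces, and then the distance-minimizer lands on a positive-dimensional stratum rather than at a $k$-fold intersection. That you land at $(2eDN/k)^k$ --- \emph{sharper} than the theorem --- is itself a warning sign of an undercount rather than evidence of a stronger result. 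The correct Morse-theoretic bookkeeping (as in Warren, or the modern Basu--Pollack--Roy treatment) counts critical points of a generic function restricted to \emph{every} stratum: for each subset $S$ of size $j\le k$, the critical points of $f$ on $\bigcap_{i\in S}\{p_i=0\}$ are solutions of the $j$ defining equations together with $k-j$ Lagrange/rank conditions, all of bounded degree, and B\'ezout is applied to that augmented system. Summing over $j=0,\dots,k$ is exactly where the factor $\sum_{i=0}^{k}\binom{N}{i}$ in Warren's bound comes from, as opposed to the single term $\binom{N}{k}$ in your count. Your perturbation-to-general-position step and the $2^j$ regions-per-critical-point observation are sound ingredients, but without the stratum-by-stratum count the argument does not close. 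Your closing alternative --- invoking Milnor's Betti-number bound and then tightening the $k$-dependence --- is actually much closer to how the cited sources proceed, and would be the route to pursue.
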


This general upper bound for arrangements implies our asymptotic upper bound.

\begin{lem}\label{lem: staircase upper bound}
The number of realizable staircase tableaux is at most $n^{5n+o(n)}$.
\end{lem}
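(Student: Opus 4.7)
My plan is to bound $|\rSYT(n)|$, which by \cref{lem:hsurf-orderings} equals the number of regions of the hypersurface arrangement $\{H_{i,j,k,\ell}\}$ in $\R^{2n}$ meeting the cone $\mathcal{W} = \{w_1 < \cdots < w_n\}$. A direct application of Milnor-Thom-Warren (\cref{thm:bigmachine}) to the whole arrangement counts $\Theta(n^4)$ polynomials of degree $2$ in $2n$ variables, producing only the weaker bound $n^{6n+o(n)}$ on the total number of regions in $\R^{2n}$; the key idea is to recover a missing factor of $n!$ using the natural $S_n$-symmetry that permutes the coordinate indices of $(c,w)$.

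The first step is to enlarge the arrangement by adjoining the $\binom{n}{2}$ coordinate hyperplanes $\{w_i = w_j\}$, producing an arrangement $\mathcal{H}$ whose ambient space is partitioned into the $n!$ chambers $\mathcal{W}_\sigma := \{w_{\sigma(1)} < \cdots < w_{\sigma(n)}\}$ indexed by $\sigma \in S_n$. Inside any single chamber the new hyperplanes introduce no subdivisions, so the number of regions of the original arrangement meeting $\mathcal{W}$ equals the number of regions of $\mathcal{H}$ contained in $\mathcal{W}_{\mathrm{id}} = \mathcal{W}$. The augmentation only adds $\binom{n}{2}$ polynomials, leaving the total still $N = \Theta(n^4)$ of degree at most $2$, so \cref{thm:bigmachine} yields
\[
\#\,\text{regions of }\mathcal{H} \;\leq\; \left(\frac{4e \cdot 2 \cdot N}{2n}\right)^{2n} \;=\; n^{6n+o(n)}.
\]

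The core step is then the symmetry reduction. The symmetric group $S_n$ acts on $\R^{2n}$ by simultaneously permuting the $c$- and $w$-indices; this action permutes the quadratic defining polynomials (up to sign, by permuting the quadruples $(i,j,k,\ell)$) and likewise permutes the adjoined polynomials $w_i - w_j$, so $\mathcal{H}$ is $S_n$-invariant. Moreover, $S_n$ acts freely and transitively on the collection of chambers $\{\mathcal{W}_\sigma\}$, whence the induced action on regions of $\mathcal{H}$ is free (two regions lying in different chambers cannot coincide). The regions of $\mathcal{H}$ therefore partition into $S_n$-orbits of size exactly $n!$, and each chamber receives the same count. Dividing gives
\[
|\rSYT(n)| \;\leq\; \frac{\#\,\text{regions of }\mathcal{H}}{n!} \;\leq\; \frac{n^{6n+o(n)}}{n!} \;=\; n^{5n+o(n)},
\]
using $\log(n!) = n\log n - n + O(\log n)$, which contributes the saving of one full $n\log n$ in the exponent.

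The main obstacle is conceptual rather than technical: one must recognize that the naive Milnor-Thom-Warren bound is off by a factor of roughly $n^n$ and that this shortfall is exactly what the obvious $S_n$-relabeling symmetry buys back. Once this observation is in hand, the remaining verifications — that $\mathcal{H}$ is $S_n$-invariant, that the action on chambers is free, and that adjoining the $\binom{n}{2}$ linear hyperplanes does not disturb the asymptotic count $N = \Theta(n^4)$ — are all routine.
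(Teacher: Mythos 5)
Your proof follows essentially the same route as the paper's: reduce to counting regions of the hypersurface arrangement $\{H_{i,j,k,\ell}\}$ via \cref{lem:hsurf-orderings}, apply the Milnor--Thom--Warren bound (\cref{thm:bigmachine}) to obtain $n^{6n+o(n)}$ total regions, and then divide by $n!$ using the $S_n$-symmetry in the $w$-coordinates. Your extra step of adjoining the $\binom{n}{2}$ hyperplanes $\{w_i = w_j\}$---which guarantees that every region lies in a single chamber so the $S_n$-action on regions is genuinely free---is a small and welcome tightening of the symmetry argument (the paper asserts the $\frac{1}{n!}$ count directly without this step), but it does not constitute a different approach.
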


\begin{proof}
By Lemma \ref{lem:hsurf-orderings} and the discussion thereafter, it suffices to bound the number of regions of the hypersurface arrangement with hypersurfaces 
\[H_{i,j,k,\ell} = \{c, w \in \R^{n}: (c_{i} - c_{j})(w_{k} - w_{\ell}) - (c_{k} - c_{\ell})(w_{i} - w_{j}) = 0\},\]
where $i,j,k,\ell$ satisfy $1 \leq i < j \leq n$, $1 \leq k < \ell \leq n$, and $(i,j) \neq (k,\ell)$, intersected with the cone $\{(c,w)\in \R^{n}: {w_{i} < w_{i+1} \text{ for all } 1 \leq i \leq n}\}$. The degree $D$ of these polynomials is $2$, the number of variables is $2n$, and the number of polynomials is $\binom{\binom{n}{2}}{2} = O(n^{4})$. Thus, by \cref{thm:bigmachine}, the total number of regions of the hypersurface arrangement $\mathcal{H}$ of all $H_{i,j,k,\ell}$ is at most 
\[\left(\frac{4e\cdot 2 \cdot \binom{\binom{n}{2}}{2}}{2n}\right)^{2n} = n^{6n + o(n)}. \]
Note that this arrangement is symmetric with respect to reordering the coordinates $w_{1}, \dots, w_{n}$. Hence, the set of regions containing a point such that $w_{1} < w_{2} < \dots < w_{n}$, is $\frac{1}{n!}$ times the number of regions of $\mathcal{H}$. Therefore, the total number of realizable staircase tableaux is at most $\frac{n^{6n+o(n)}}{n!} = n^{5n + o(n)}$. 
\end{proof}

\begin{figure}
    \centering
    \[\begin{tikzpicture}[scale = .5]
    \draw[black] (-2,-1) -- (6,3);
    \draw[black] (-2, .5) -- (6, -1.5 );
    \draw[black] (-2, 5) -- (6,-3); 
    \draw[green] (-2,2) -- (6,-6);
    \draw[green] (-2,2) -- (6,0);
    \draw[green] (-2,-4) -- (6,0);
    \draw (0,0) node[red, circle,fill, inner sep = 1.5pt] {};
    \draw (2,1) node[blue, circle,fill, inner sep = 1.5pt] {};
    \draw (4,-1) node[orange, circle,fill, inner sep = 1.5pt] {};
\end{tikzpicture}
    \]
    \caption{In the proof of Lemma \ref{lem:staircaselowbound}, one considers a generic configuration of points such that the order on slopes of all lines between the points gives a realization of a given staircase tableau. An example configuration of $3$ points together with all lines of each slope drawn through each set of points is depicted in the figure. The regions of this arrangement correspond to the possible extension of this realizable tableau given by adding a new point to the configuration. }
    \label{fig:staircaseproof}
\end{figure}

It remains to find lower bounds, and to do so we apply a similar argument to the case of realizable rectangular tableaux by deriving a recurrence.

\begin{lem} \label{lem:staircaselowbound}
For every $n \ge 2$, we have that 
\[|\rSCYT(n)| \geq \left(\frac{(5n-3)(n+2)n(n-1)(n-2)}{120} \right)|\rSCYT(n-1)|.\]
\end{lem}

\begin{proof}
For each realizable staircase tableau $T$ of shape $(1,2,\dots, n-1)$, fix a realization given by a total ordering of slopes of the line segments between pairs from a list of $n$ points $(x_{1},y_{1}), \dots, (x_{n},y_{n})$. We can do this, since the set of $x, y$ that induce the same tableau is a full dimension region of a hypersurface arrangement. 

To create a new tableau, we add a new point $(x_{n+1},y_{n+1})$ to this configuration. Consider the line arrangement $\calA_n$ given by taking all lines through each of the points $(x_{i},y_{i})$ for $1 \leq i \leq n$ of slope $\frac{y_{k} - y_{j}}{x_{k} - x_{j}}$ for each $1 \leq j < k \leq n$.  Let $I_n$ be the set of points in $\R^2$ where any of the lines in $\calA_n$ intersect.  Let $L(x,y)$ be the set of lines through $(x,y)$ of slope $\frac{y_{k} - y_{j}}{x_{k} - x_{j}}$ for each $1 \leq j < k \leq n$.  The points $(x,y) \in \R^2$ such that $L(x,y)$ intersects nontrivially with $I_n$ form a union of lines in $\R^2$.  Thus, inside any region in the arrangement $\calA_n$, we can choose a point $(x_{n+1},y_{n+1})$ such that no line in $L(x_{n+1},y_{n+1})$ passes through any point of $I_n$. 

We now want to lower bound the number of new regions formed by such a choice of $(x_{n+1},y_{n+1})$. Given such a choice, consider the line arrangement $\calA_{n+1}$ and the set of intersection points $I_{n+1}$, and let $r(\mathcal{A}_{n})$ denote the number of regions of the $\mathcal{A}_{n}$.  It is a standard result in the theory of line arrangements (See for example the proof of Theorem $8.4$ in \cite{CompGeoTextbook}) that the number of new regions is at least the number of new intersection points, since segments of the new line incident to the new intersection point must subdivide existing regions.  That is, we have 
$$r(\mathcal{A}_{n+1}) - r(\mathcal{A}_n) \geq |I_{n+1} \setminus I_n|\,.$$
By our restriction on the choice of $(x_{n+1},y_{n+1})$, we know that the intersection point between a line in $L(x_{n+1},y_{n+1})$ and each non-parallel line in $\calA_n$, of which there are $\left(\binom{n}{2} - 1\right)(n-1)$, does not lie in $I_n$.  Moreover, all these intersection points are distinct since the lines in $L(x_{n+1},y_{n+1})$ all pass through $(x_{n+1},y_{n+1})$, and hence cannot intersect elsewhere.  Thus, we have
$$|I_{n+1} \setminus I_n| \geq |L(x_{n+1},y_{n+1})| \cdot \left(\binom{n}{2} - 1\right)(n-1) = \binom{n}{2}\left(\binom{n}{2} - 1\right)(n-1) = 2(n-1) \binom{\binom{n}{2}}{2}\,.$$

Applying this bound inductively, we may lower bound the number of regions of this line arrangement to be at least $\sum_{i=1}^{n} 2(i-1) \binom{\binom{i}{2}}{2}.$ Thus, the number of possible extensions of any fixed $T$ by adding a new point is at least $\sum_{i=1}^{n} 2(i-1) \binom{\binom{i}{2}}{2} $. Note that this map is at most $(n+1)$-to-$1$, since the original tableau may be recovered by deleting one of the points from the new configuration. Therefore, the total number of new tableaux generated across all realizable staircase tableaux of shape $(1,2,\dots,n-1)$ is at least 
\[\frac{1}{n+1}\sum_{i=1}^{n} 2(i-1) \binom{\binom{i}{2}}{2} |\rSCYT(n-1)| = \frac{(5n-3)(n+2)n(n-1)(n-2)}{120} |\rSCYT(n-1)|, \]
where this final equality may be verified using a computer algebra system such as Mathematica.
\end{proof}

\begin{remark}
We are thankful to an anonymous referee for identifying a closed form expression for Lemma~\ref{lem:staircaselowbound}.  
\end{remark}

\cref{thm:staircasebounds} then follows from the upper bound of $n^{5n+o(n)}$ we already showed in Lemma \ref{lem: staircase upper bound} and induction on the result of \cref{lem:staircaselowbound} to find a lower bound of $n^{5n-o(n)}$.

\begin{remark}\label{rem:realizablestaircase}
    Goodman and Pollack's 1986 work \cite{GoodmanPollackBounds} proves the weaker upper bound $n^{8n+o(n)}$ on the number of allowable sequences (equivalently, $|\rSCYT(n)|$).  A straightforward application of the hook length formula \cite{HookLength} shows that the total number of staircase standard Young tableaux is 
    $$\frac{\binom{n+1}{2}!}{\prod_{k=0}^{n-1} (2k+1)^{n-k}} \leq \frac{\binom{n+1}{2}!}{(2n)^{\binom{n+1}{2}}} = n^{n^2/2 + o(n^2)}\,.$$  Thus, Goodman and Pollack's result is enough to show that the proportion of staircase standard Young tableaux of shape $(n,n-1,\dots,1)$ that are realizable is $n^{-n^2/2 + o(n^2)}$.  In the language of Angel, Gorin, and Holroyd \cite{PatternTheorem}, the staircase standard Young tableaux are equivalent to sorting networks, and those tableaux that are realizable are precisely the geometrically realizable sorting networks.  Thus, Goodman and Pollack's earlier work provides an alternate proof to Angel, Gorin, and Holroyd's result \cite[Theorem 1.3]{PatternTheorem} that the proportion of sorting networks that are geometrically realizable tends to $0$ and $n$ tends to $\infty$, and in fact gives a stronger bound on the rate of decay (answering a follow-up question of Angel, Gorin, and Holroyd).
\end{remark}

\section{Further Directions}
\label{sec:FurDir}

In the work of Mallows and Vanderbei, they characterize the non-realizable $3\times n$ tableaux for $3 \leq n\leq 6$ as well as the non-realizable $4\times4$ tableaux in terms of \emph{taboo configurations}~\cite{MallowsVanderbei}.  These are minimal sets of inequalities between entries of a tableau that guarantee that the tableau is not realizable.  Farkas' lemma guarantees that for every non-realizable tableau, there is some minimal set of inequalities between linear combinations of the entries that demonstrates non-realizability.  However, in the taboo configurations provided by Vanderbei and Mallows, it is enough to consider inequalities between two entries, with each entry appearing in at most one such inequality.  We conjecture that this is always the case.

\begin{conjecture}
An $m \times n$ tableau $M = \begin{bmatrix}M_x\end{bmatrix}_{x \in [m]\times[n]}$ is not realizable if and only if there exist two disjoint sets of equal size $A = \{a_1,\dots,a_\ell\},B=\{b_1,\dots,b_\ell\} \subseteq [m]\times[n]$ such that
\begin{enumerate}[(i)]
\item\label{cond::real1} $M_{a_k} < M_{b_k}$ for each $k \in \{1,\dots,\ell\}$,
\item\label{cond::real2} $\left|A \cap \left(\{i\} \times[n]\right)\right| = \left|B \cap \left(\{i\} \times[n]\right)\right|$ for each $i \in [m]$, and
\item\label{cond::real3} $\left|A \cap \left([m]\times\{j\}\right)\right| = \left|B \cap \left([m]\times\{j\}\right)\right|$ for each $j \in [n]$.
\end{enumerate}
\end{conjecture}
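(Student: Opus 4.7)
The forward direction is immediate from Proposition \ref{prop:tabooconfig}, so all the content lies in the converse: if $M$ is not realizable, one must produce disjoint sets $A, B$ with the stated properties. The plan is to apply linear programming duality and then to perform a combinatorial reduction to a $0$-$1$ certificate.

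Realizability of $M$ is precisely the feasibility of the strict system $u_i + v_j < u_k + v_\ell$ over all comparable pairs $(i,j) \prec (k,\ell)$ (with $\prec$ denoting the order on cells induced by $M$). By Farkas' lemma, infeasibility is equivalent to the existence of nonnegative rationals $\lambda_{x,y}$, indexed by pairs $x \prec y$ and not all zero, satisfying
\[\sum_{x \prec y} \lambda_{x,y}\,(\mathbf{1}^{\mathrm{row}}_y - \mathbf{1}^{\mathrm{row}}_x) = 0 \quad\text{and}\quad \sum_{x \prec y} \lambda_{x,y}\,(\mathbf{1}^{\mathrm{col}}_y - \mathbf{1}^{\mathrm{col}}_x) = 0.\]
Clearing denominators gives an integer flow whose source multiset $\widetilde A$ and sink multiset $\widetilde B$ have matching row and column marginals, together with a bijection $\widetilde A \to \widetilde B$ witnessing $M_a < M_b$ on each pair. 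If some cell $c$ appears both as a source and as a sink, the short-circuiting step works: any $x$ with $\lambda_{x,c} > 0$ and $y$ with $\lambda_{c,y} > 0$ satisfy $x \prec c \prec y$, so one unit each of $\lambda_{x,c}$ and $\lambda_{c,y}$ may be replaced by a single unit of $\lambda_{x,y}$, strictly decreasing total mass. Iterating, one may assume $\widetilde A \cap \widetilde B = \emptyset$ as multisets.

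The remaining hurdle, and the crux of the conjecture, is to promote this multiset certificate to a genuine $0$-$1$ one. Any row-and-column-balanced signed indicator $\mathbf{1}_{\widetilde A} - \mathbf{1}_{\widetilde B}$ lies in the integer lattice spanned by alternating even cycles in $K_{m,n}$, so structurally $\widetilde A - \widetilde B$ decomposes into such elementary cycles; one hopes some cycle in the decomposition yields a $0$-$1$ taboo configuration. The main difficulty is that the rigid $2$-coloring along an extracted cycle forces a specific pairing of sources to sinks, which need not respect $M_a < M_b$ even when the global flow does. I would attack this either by an extremal argument on a minimum-mass counterexample—choosing a cycle minimizing some weight derived from $M$—or by constructing an auxiliary directed graph on the comparable pairs $x \prec y$ whose directed cycles correspond exactly to admissible taboo configurations, and showing that any non-realizable tableau forces such a cycle. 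This final step is where I expect the argument to be most delicate, and it is plausible that it is precisely here that the conjecture could fail for sufficiently large $m, n$.
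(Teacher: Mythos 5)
This statement is labeled a \emph{conjecture} in the paper, and the authors do not prove it; they explicitly write that the forward implication follows from Proposition~\ref{prop:tabooconfig} and that ``the difficulty lies in showing whether these conditions are necessary for non-realizability.'' So there is no proof in the paper to compare yours against, and your sketch should not be graded as if it were attempting a proved result.

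That said, your analysis of where the difficulty lies is accurate and aligns with what the authors intend. The forward direction is exactly Proposition~\ref{prop:tabooconfig}. Your Farkas/Gordan setup for the converse is sound: realizability of $M$ is feasibility of a finite system of strict homogeneous inequalities in $(u,v)\in\R^m\times\R^n$, and infeasibility yields a nonnegative, nonzero, rational certificate $\lambda$ whose row- and column-marginals cancel. After clearing denominators you indeed get disjoint (after short-circuiting through any cell appearing on both sides, which is valid since $\prec$ is a total order and hence transitive) multisets $\widetilde A,\widetilde B$ with balanced marginals and a matching respecting $M_a<M_b$. The genuine gap is exactly the one you flag: this is a \emph{multiset} certificate, while the conjecture demands honest sets $A,B$. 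Extracting an elementary alternating cycle from $\mathbf{1}_{\widetilde A}-\mathbf{1}_{\widetilde B}$ rigidly determines the source--sink pairing along that cycle, and there is no reason it respects the order $M$ induces, even though the aggregate flow does. You have not closed that gap, and the paper does not either; your closing caveat --- that the conjecture might fail at exactly this point --- is a reasonable and honest assessment of an open problem, not a defect in your argument.

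Two minor remarks. First, you only need Farkas/Gordan over covering pairs in the total order on cells, not all comparable pairs; this trims the certificate but doesn't change the obstruction. Second, the multiset version of the taboo condition (allowing repeated cells in $A$ and $B$) is itself already sufficient for non-realizability by the same averaging argument as Proposition~\ref{prop:tabooconfig}, so the conjecture is really asserting that whenever a multiset certificate exists, a $0$--$1$ one does too. Framing it that way may be a cleaner target if you pursue the extremal or auxiliary-digraph strategies you propose.
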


It is straightforward to prove that the above conditions are sufficient to show that a tableau is not realizable (see Proposition \ref{prop:tabooconfig}). Thus, the difficulty lies in showing whether these conditions are necessary for non-realizability.

In Subsection~\ref{subsec:single-row-extensions}, we provide an asymptotically tight lower bound on the number of realizable rectangular tableaux by lower bounding the number of single-row extensions of a fixed realizable rectangular tableau.  In Proposition \ref{prop:general-extensions}, we give an exact formula for the number of single-row extensions of a realizable rectangular tableau $\tabfunc(x \circ y)$, where $x$ and $y$ are fixed, that can be obtained appending an entry to $x$.  This formula only depends on the size of the tableau and the sum of the entries in the last row.  It would be interesting to perform a similar enumeration without fixing the vectors realizing the tableau.

\begin{op}\label{op:formula-single-row-exts} Find a recursive formula for $\rSYT(m,n)$ (possibly in terms of single-row extensions) and explore the generating functions $R_m(z) \colonequals \sum_{n} \rSYT(m,n)z^n$.
\end{op}

We can generalize the notion of realizability to higher-dimensional Young tableaux and to functions other than outer sums.  Consider a $d$-dimensional array having length $n$ in each dimension with entries $a_{i_1,\dots,i_d}$, along with a degree $D$ polynomial $f \in \R_{\geq 0}[z_1,\dots,z_d]$.  Given $x_j \in \R^{n}$ for $j = 1,\dots,d$, we define the entries of the array by $a_{i_1,\dots,i_d} = f(x_{1,i_1},\dots,x_{d,i_d})$.  When all the entries are pairwise different and the entries are increasing within each vector $x_j$, we can then obtain a $d$-dimensional standard Young tableau by replacing each entry with its relative position among the entries of the array, using the numbers $1$ through $n^d$.  We call such a tableau an \emph{$f$-realizable $n^{\times d}$ (standard Young) tableau}.  Taking $d = 2$ and $f = z_1 + z_2$ recovers the realizable $n\times n$ tableaux.  As a natural analogue to Corollary \ref{cor::prob_to_zero}, we can quickly show that almost all $n^{\times d}$ standard Young tableaux are not realizable.

\begin{prop}
For $d \geq 2$ and a polynomial $f \in \R_{\geq 0}[z_1,\dots,z_d]$, the probability that an $n^{\times d}$ tableau is $f$-realizable goes to $0$ as $n$ tends to infinity.  
\end{prop}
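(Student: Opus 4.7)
The plan is to mimic the proof of \cref{cor::prob_to_zero}: upper bound the number of $f$-realizable tableaux via a hypersurface arrangement argument akin to \cref{lem:hsurf-orderings}, and lower bound $|\SYT(n^{\times d})|$ by exhibiting many linear extensions of the product poset $[n]^d$.

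For the upper bound, each $f$-realizable $n^{\times d}$ tableau corresponds to a distinct region of the hypersurface arrangement in $\R^{dn}$ cut out by the polynomials
\[
f(x_{1,i_1},\ldots,x_{d,i_d}) - f(x_{1,j_1},\ldots,x_{d,j_d}) = 0,
\]
one for each pair of distinct index tuples $(i_1,\ldots,i_d),(j_1,\ldots,j_d)\in[n]^d$. This arrangement has $\binom{n^d}{2}=O(n^{2d})$ polynomials, each of degree at most $D$, in $dn$ variables, so by \cref{thm:bigmachine} the number of $f$-realizable tableaux is at most
\[
\left(\frac{4eD\binom{n^d}{2}}{dn}\right)^{dn} = \exp\bigl(O(n\log n)\bigr),
\]
where the implicit constants depend on $d$ and $D$.

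For the lower bound on $|\SYT(n^{\times d})|$, I consider the rank function $r(i_1,\ldots,i_d)=i_1+\cdots+i_d$ on $[n]^d$ with level sets $L_k=r^{-1}(k)$. Since $x<y$ in $[n]^d$ implies $r(x)<r(y)$ strictly, any total order on $[n]^d$ that lists the elements of $L_d$ first (in any order), then those of $L_{d+1}$, and so on up to $L_{dn}$, is a linear extension, so $|\SYT(n^{\times d})|\geq \prod_{k=d}^{dn}|L_k|!$. Interpreting $|L_k|=n^d\cdot\Pr(U_1+\cdots+U_d=k)$ where $U_1,\ldots,U_d$ are i.i.d.\ uniform on $[n]$, the local central limit theorem gives $|L_k|=\Theta(n^{d-1})$ for a positive fraction of the $\Theta(n)$ layer indices, so Stirling yields $\prod_k|L_k|!\geq\exp(\Omega(n^d\log n))$.

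Combining the two bounds, for every fixed $d\geq 2$ the probability that a uniformly random $n^{\times d}$ tableau is $f$-realizable is at most $\exp(O(n\log n))/\exp(\Omega(n^d\log n))\to 0$. The main technical point is the central-limit estimate for $|L_k|$; if one wishes to avoid analysis entirely, it suffices to produce a polynomial-in-$n$ lower bound on each central $|L_k|$ by an explicit injection, since the product over $\Theta(n)$ layers each of size $\Omega(n^{d-1})$ still gives $\exp(\Omega(n^d\log n))$, and this dominates $\exp(O(n\log n))$ whenever $d\geq 2$.
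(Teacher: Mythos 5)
Your proof is correct, and the upper bound half is essentially identical to the paper's: both count regions of the hypersurface arrangement given by the differences $f(x_{1,i_1},\dots,x_{d,i_d}) - f(x_{1,j_1},\dots,x_{d,j_d})$ and apply the Milnor--Thom--Warren bound (\cref{thm:bigmachine}) to get $\exp(O(n\log n))$ realizable tableaux, with constants depending on $d$ and $D$.

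Where you diverge is the denominator. The paper lower-bounds $|\SYT(n^{\times d})|$ simply by observing that the restriction to a fixed two-dimensional $n\times n$ slice of the array is a surjection onto $\SYT(n,n)$, hence $|\SYT(n^{\times d})| \geq |\SYT(n,n)| = n^{\Omega(n^2)}$. You instead decompose $[n]^d$ into rank levels $L_k = \{(i_1,\dots,i_d): \sum i_j = k\}$, observe each level is an antichain so that any layer-by-layer ordering is a linear extension, and estimate $\prod_k |L_k|!$ via a local CLT (or an explicit injection) to get $n^{\Omega(n^d)}$. Your bound is strictly stronger --- it is actually the correct order of magnitude of $\log|\SYT(n^{\times d})|$, since the hook-length analogue gives $n^{O(n^d)}$ from above --- but it requires a CLT-type estimate that the paper avoids entirely. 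For the statement being proved (the ratio tends to $0$) both lower bounds far exceed the $\exp(O(n\log n))$ upper bound once $d\geq 2$, so the paper's shortcut is sufficient and a bit more self-contained given that $|\SYT(n,n)|$ is already controlled in \cref{lemma::numer_of_SYT}. Your stronger bound would be the right tool if one later wanted the decay rate, not just that it goes to $0$.

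One small presentational nit: you write that each realizable tableau ``corresponds to a distinct region''; the cleaner logical direction is that the region-to-tableau map is well-defined and hits every $f$-realizable tableau, so the number of $f$-realizable tableaux is at most the number of regions. The content is the same, but as phrased it momentarily suggests an injection from tableaux to regions, which isn't what you use.
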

\begin{proof}
By a straightforward adaptation of the methods in Subsection~\ref{subsec:upper-bound-rect}, we can biject the $f$-realizable $n^{\times d}$ tableaux with the regions of an arrangement formed by $\binom{\binom{n}{d}}{2} \leq n^{2d}$ hyperplanes.  By the well-known \cref{thm:bigmachine} (due to Milnor, Thom, and Warren), the number of regions in this arrangement is at most
$$\left( \frac{4eDn^{2d}}{dn}\right)^{dn} = n^{(2d^2-d)n + o(n)}\,.$$
We now use a rudimentary lower bound for the total number of $n^{\times d}$ tableaux given by enumerating just the $n\times n$ subtableaux.  Using the hook length formula \cite{HookLength} and a similar sequence of bounds as in the proof of \cref{cor::prob_to_zero}, the number of $n\times n$ standard Young tableaux given by 
\begin{align*}(n^2)! \prod_{j=0}^{n-1} \frac{j!}{(n+j)!} &\geq \frac{(n^2)!}{(2n)^{{n \choose 2}} (n!)^n} \geq \frac{n^{n^2}}{(2n)^{{n \choose 2} + n}}
\end{align*}
We hence can see that there are at least $n^{\Omega(n^2)}$ total $n^{\times d}$ standard Young tableaux. 
\end{proof}

As the above proposition demonstrates, some of our methods extend to the generalized notion of $f$-realizable $n^{[d]}$ tableaux.  It would be interesting to obtain similar results about the asymptotic enumeration of realizable tableaux in this more general setting, possibly by using an adaptation of the existing techniques.

\begin{op}
Prove tight asymptotic bounds on the number of $f$-realizable $n^{[d]}$ tableaux.
\end{op}

Our notion of realizability for rectangular tableaux also has further generalizations. Namely, the set of matrices that arise from an outer sum are precisely the set of matrices of tropical rank $1$ (see \cite{troprank}). Our question of which tableaux are realizable is equivalent to asking which regions of the intersection of the tropical variety of rank $1$ matrices with the braid arrangement are nonempty. In fact, one can see this as the dual expression of the zonotope corresponding to our hyperplane as a projection of the permutahedron. Such a projection always exists for monotone path polytopes of zonotopes \cite{sweeppolys}. A natural extension of our question is then to ask what occurs when we allow the rank to grow. Of course for matrices of full rank, all possible total orders of coordinates are attained, but what is the smallest rank for which that phenomenon occurs?

\begin{op}
We call a rectangular tableau $r$-realizable if there exists a rank $r$ matrix such that the total ordering of the entries of that matrix corresponds to the total ordering of the entries of the tableau. In terms of $m$ and $n$, what is the minimal (tropical) rank $r(m,n)$ such that all rectangular tableaux of shape $(m,n)$ are $r(m,n)$-realizable? 
\end{op}

Note that this question may depend on the chosen notion of tropical rank, since there are many that do not coincide in general tropical geometry but do coincide in the case of rank $1$. Furthermore, considering classical rank for matrices with positive entries would still be a proper generalization of what we do here, since applying log to each entry is an order preserving bijection taking rank $1$ matrices with positive entries to tropical rank $1$ matrices. To be formal, we have the following corollary to \cref{thm::upper and lower}:

\begin{cor}
The number of possible orderings of coordinates of a rank $1$ $m \times n$ matrix with positive entries is asymptotically $m^{(2+o(1))(n+m_n)}(m_n)!n!$, for $3 \leq m_n\leq n$ and both tending to infinity.
\end{cor}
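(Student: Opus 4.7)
The plan is to reduce the counting problem to one already solved by \cref{thm::upper} and \cref{thm::lower}, by following the chain of bijections indicated immediately before the corollary statement. First, I would use the map $A \mapsto \log A$ (entrywise) on the set of rank $1$ positive $m \times n$ matrices. Since $A_{ij} = u_i v_j$ with $u_i, v_j > 0$ corresponds to $\log A_{ij} = \log u_i + \log v_j$, this identifies rank $1$ positive matrices with outer sums $x \circ y$ for arbitrary $x \in \R^m$, $y \in \R^n$. Because $\log$ is strictly monotonic, the total ordering on the $mn$ coordinates of $A$ matches that of $x \circ y$, so the counting problem becomes: enumerate total orderings arising from $x \circ y$ for generic (but not necessarily increasing) $x, y$.

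Next, I would establish the equality
\[
\#\{\text{orderings from generic } x \circ y\} = |\rSYT(m,n)| \cdot m! \cdot n!.
\]
For this, I would argue that every generic pair $(x,y)$ can be written as $(\sigma \cdot x', \tau \cdot y')$ for a unique permutation pair $(\sigma, \tau) \in S_m \times S_n$ and unique increasing vectors $(x', y')$; the resulting ordering on $[m] \times [n]$ is obtained from the realizable tableau $\tabfunc(x' \circ y') \in \rSYT(m,n)$ by permuting row indices via $\sigma$ and column indices via $\tau$. The main thing to verify is that distinct triples $(\sigma, \tau, T)$ yield distinct orderings: if two triples produced the same ordering, then the ordering would determine both the row-wise and column-wise increasing structure, so the permutations and the underlying realizable tableau would be forced to coincide. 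This is straightforward since the ordering restricted to any row or column of the matrix gives back the row/column permutation.

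Finally, I would plug in the asymptotic $|\rSYT(m,n)| = m^{(2+o(1))(m+n)}$ from the remark following \cref{thm::upper} and \cref{thm::lower}, valid for $m \leq n$ with both tending to infinity, to conclude that the number of orderings is $m^{(2+o(1))(m+n)} \cdot m! \cdot n!$, as claimed. I do not anticipate a serious obstacle: the content of the corollary is essentially packaging the $\log$-observation together with the previously established $S_m \times S_n$ symmetry of the arrangement used in \cref{subsec:upper-bound-rect}. The only delicate point is a careful genericity argument ensuring that the orderings arising from rank $1$ matrices with positive entries coincide exactly with those coming from generic outer sums, which follows from the density of generic configurations and the open nature of each ordering region in the hyperplane arrangement $\calC$.
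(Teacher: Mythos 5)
Your proposal is correct and follows exactly the route the paper intends: the paper states this corollary without proof, justifying it only by the observation that entrywise $\log$ is an order-preserving bijection between rank-$1$ positive matrices and outer sums, combined with the $S_m \times S_n$ symmetry already established in \cref{subsec:upper-bound-rect} and the asymptotic $|\rSYT(m,n)| = m^{(2+o(1))(m+n)}$ from \cref{thm::upper} and \cref{thm::lower}. You have simply made those steps explicit, including the clean argument that the row and column permutations (and hence the underlying tableau) are recoverable from the ordering restricted to rows and columns.
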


\begin{proof}
By Theorem \ref{thm::upper and lower}, the number of $m_{n} \times n$ realizable standard Young tableaux satisfy the asymptotic bound of $m^{(2+o(1))(n+m_{n})}$. These are precisely the number of possible total orderings of a matrix arising from the outer sum of two strictly increasing vectors. The total number of orderings of a tropical rank $1$ matrix is the set of orderings of the outer sum of any pair of vectors. These are precisely the set of all regions of $C_{n,m}$ and hence, by the proof of Theorem \ref{thm::upper and lower}, the asymptotic number of them is $m^{(2+o(1))(n+m_{n})} (m_{n})!n!$.

Thus, it suffices to show that the set of possible orderings of the coordinates of a rank $1$ $m \times n$ matrix with positive entries is the same as the number of orderings of a tropical rank $1$ matrix to prove the corollary. A rank $1$ matrix is of the form $x y^{T} = (x_{i}y_{j})_{i \in [m], j \in [n]}$. Suppose that $xy^{T}$ is positive, so $x_{i} y_{j} > 0$ for all $i \in [m]$ and $j \in [n]$. Then without loss of generality, we may assume that $x_{i}$ and $y_{j}$ are also positive for all $i \in [m]$ and $j \in [n]$. Apply $\log$ to each coordinate to arrive at the matrix 
\[M = (\log(x_{i}y_{j}))_{i \in [m], j \in [n]} = (\log(x_{i}) + \log(y_{j}))_{i \in [m], j \in [n]}. \]
Since $\log$ is increasing, the total ordering on the coordinates of $M$ is the same as the total ordering on the coordinates $x y^{T}$. Furthermore, $M$ is tropical rank $1$, since it is the outer sum of $(\log(x_{1}), \dots, \log(x_{m}))$ and $(\log(y_{1}), \dots, \log(y_{n}))$. Therefore, the set of orderings of entries of rank $1$ matrices with positive entries is a subset of the set of orderings of tropical rank $1$ matrices.

Similarly, a tropical rank $1$ matrix has a representation as an outer sum of not necessarily non-negative vectors $w \in \mathbb{R}^{m}$ and $z \in \mathbb{R}^{n}$. Applying the exponential function to each coordinate yields a rank $1$ matrix with positive coordinates given by the outer product of $(e^{w_{1}}, \dots, e^{w_{m}})$ and $(e^{z_{1}},\dots,e^{z_{n}})$. Furthermore, since the exponential function is increasing, the resulting matrix has the same ordering on the coordinates. Hence, the set of orderings of entries of rank $1$ matrices with positive entries is also a superset of the set of orderings of tropical rank $1$ matrices and therefore the same as desired.
\end{proof}

Another direction forward comes from the sorting networks perspective on our results. One advantage of the results of Angel, Gorin, and Holroyd in \cite{PatternTheorem} for finding upper bounds on the number of realizable staircase tableaux is that they apply to other purely combinatorial notions of realizability. Namely, they show that if a single constant-size subconfiguration is forbidden for some subset of all staircase tableaux, then the probability a random staircase tableau of size $n$ is in that subset tends to $0$ exponentially fast as $n$ approaches $\infty$. It would be interesting to have an analogous result for rectangular or square tableaux. Our results suggest that such a result could exist even for $3 \times n$ tableaux.

Similarly, geometrically realizable sorting networks play a vital role in the analysis of random staircase tableaux. In particular, Angel, Holroyd, Romik, and Virag observe in Theorem $5$ of \cite{RandSortNets} that a random sorting network is approximated arbitrarily well by a geometrically realizable sorting network, and Dauvergne sharpened this result in Theorem $4$ of \cite{ArchLimit}. Romik and Pittel proved an analogous limit shape result for rectangular tableaux in \cite{RectTableauxLimShape}, and we conjecture that through unpacking their work, there should be an analogous statement saying that a random rectangular tableau may be approximated by a realizable rectangular tableau for the notion of realizability we study here. 




\section*{Acknowledgments}  
This work was completed in part at the 2022 Graduate Research Workshop in Combinatorics, which was supported in part by NSF grant $\#$1953985, and a generous award from the Combinatorics Foundation. We would like to thank Shiliang Gao and other attendees of the GRWC, as well as Bernd G\"{a}rtner, Andrey Kupavskii, Igor Pak, and Dan Romik for useful discussions. The first author was supported by UIUC Campus Research Board RB 22000, the sixth author was supported by NSF grant DMS-2039316, and second, third, and fifth authors were all supported by the NSF GRFP.  We thank an anonymous referee for pointing out an error in the original proof of \cref{cor::prob_to_zero}.
\bibliographystyle{amsplain}
\bibliography{bibliography.bib} 

	
\end{document}